% Template for math papers, Michael Bjorklund, Chalmers.

\documentclass[11pt,reqno,a4paper]{amsart}

%% Page layout
%\oddsidemargin0.3cm
%\evensidemargin0.3cm
%\textwidth15.7cm
%\textheight 22.5 cm
%\topmargin=1cm
%
% Packages
%\usepackage{todonotes}
\usepackage{etoolbox}
\usepackage{amsmath}
\usepackage{enumerate}
\usepackage{amssymb}
\usepackage[initials]{amsrefs}
\usepackage{graphicx,latexsym,amsbsy,amscd,amsthm,epsfig,verbatim}
\usepackage{amsfonts}
\usepackage{comment}
\usepackage{courier} % tt        
\usepackage[T1]{fontenc}                                                    
\usepackage{hyperref}
\usepackage{slashed}
\usepackage{tikz-cd}

\usepackage{todonotes}
\usepackage{mathtools}
\usepackage{makecell}
\usepackage{bm}
\usepackage{wasysym}

%\renewcommand{\rmdefault}{ppl} % rm                                                  
%\linespread{1.05}        % Palatino needs more leading                                                  
%\normalfont

% Equation numbering
%\numberwithin{equation}{section}

%% New math notation
%% Groups

%% Real and imaginary

% New commands
%% Mathcal large

%% Mathbb large

%% Math boldface large

\newcommand{\R}{\mathbb{R}}
\newcommand{\C}{\mathbb{C}}

%% Math fracture small

%% Math fracture large

%% Overline letters large

% Arrows 

% Ands

% More auxillary stuff

\newcommand\subsetsim{\mathrel{%
\ooalign{\raise0.2ex\hbox{$\subset$}\cr\hidewidth\raise-0.8ex\hbox{\scalebox{0.9}{$\sim$}}\hidewidth\cr}}}

%% New math operators

\DeclareMathOperator{\rank}{rank}

\newcommand{\Z}{\mathbb Z}

% Theoremstyles
%% Theorem
%\theoremstyle{theorem}
\newtheorem{theorem}{Theorem}%[section]

\newtheorem{proposition}[theorem]{Proposition}
\newtheorem{lemma}[theorem]{Lemma}

%% Definition
\theoremstyle{definition}
\newtheorem{definition}[theorem]{Definition}

\newtheorem{remark}[theorem]{Remark}
\newtheorem{example}[theorem]{Example}
%\newtheorem*{problem}{Problem}

%% Auxillary stuff

\bibliographystyle{abbrv}

\patchcmd{\subsection}{-.5em}{.5em}{}{}
\patchcmd{\subsubsection}{-.5em}{.5em}{}{}

\title{Products of free groups in Lie groups}

\author{Caterina Campagnolo}
\email{caterina.campagnolo@ens-lyon.fr}

\address{Unité de mathématiques pures et appliquées, ENS Lyon, France}

\author{Holger Kammeyer}
\email{holger.kammeyer@kit.edu}

\address{Institute for Algebra and Geometry, Karlsruhe Institute of Technology, Germany}

\subjclass[2010]{22E15, 17B20}

%\keywords{Lie groups, products of free groups, free subgroup rank}

\begin{document}
\begin{abstract}
For every Lie group \(G\), we compute the maximal~\(n\) such that an \(n\)-fold product of nonabelian free groups embeds into \(G\).
\end{abstract}

\maketitle

\section{Introduction}

Let \(G\) be a group.  If \(G\) is abelian, the \emph{rank} of \(G\), if finite, can be characterized as the maximal \(n\) such that \(\mathbb{Z}^n\) embeds as a subgroup of \(G\).  This suggests the following definition as a noncommutative companion.

\begin{definition}
We define the \emph{free subgroup rank} of \(G\) by
\[
\nu (G)= \max\{n \ge 0 \colon (F_2)^n \text{ embeds into } G \} \in \mathbb{N}_0 \cup \{ \infty \}.
\]
\end{definition}

Since \(F_2\) contains free groups on arbitrary many letters as subgroups, we observe that any other choice of noncommutative free groups as factors of an \(n\)-fold product leads to the same notion of free subgroup rank. 

The purpose of this paper is to compute \(\nu(G)\) for any almost connected \emph{Lie group} \(G\).  As we explain below, the task is easily reduced to the case when \(G\) is connected and simple.  The Lie algebra of \(G\) then either is \emph{absolutely simple}, meaning it has simple complexification, or if not, it carries itself the structure of a complex simple Lie algebra.  In either case, we obtain an associated irreducible root system \(\Phi\).  Recall that two roots \(\alpha, \beta \in \Phi\) are called \emph{strongly orthogonal} if they are orthogonal and \(\alpha \pm \beta \notin \Phi\).

\begin{definition}
The \emph{strong orthogonal rank} \(\operatorname{sork}(\Phi)\) of a root system \(\Phi\) is the maximal cardinality of a set of pairwise strongly orthogonal roots in \(\Phi\).
\end{definition}

As our main result, it turns out that in almost all cases \(\nu(G) = \operatorname{sork}(\Phi)\).

\begin{theorem} \label{thm:nu-of-simple}
Let \(G\) be a connected simple Lie group and let \(\mathfrak{g}_0\) be the Lie algebra of \(G\) with complexification \(\mathfrak{g} = \mathfrak{g}_0 \otimes \C\).
\begin{enumerate}[(i)]
\item \label{item:complex-structure} If \(\mathfrak{g}_0\) has itself the structure of a complex Lie algebra, then
\[ \nu(G) = \operatorname{sork} \Phi(\mathfrak{g}_0). \]
\item \label{item:real-form} If \(\mathfrak{g}_0\) has no complex structure, then
\[ \nu(G) = \operatorname{sork} \Phi(\mathfrak{g}),\, \]
unless case~\eqref{item:sopq} applies.
\item \label{item:sopq} If \(\mathfrak{g}_0 \cong \mathfrak{so}(p,q)\) for odd \(p, q\) such that \(p + q\) is divisible by four, then
\[ \quad \ \nu(G) = \operatorname{sork} \Phi(\mathfrak{g}) - 1. \]
\end{enumerate}
\end{theorem}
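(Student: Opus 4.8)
The plan is to prove the two inequalities $\nu(G)\ge\operatorname{sork}$ and $\nu(G)\le\operatorname{sork}$ (suitably corrected in case~\eqref{item:sopq}) separately, with both reductions routed through the combinatorics of pairwise commuting rank-one subgroups. The elementary but crucial observation is that \emph{every} nonabelian connected simple Lie group contains a copy of $F_2$: for the noncompact ones this follows from the Tits alternative, or from an explicit ping-pong pair in $\SL_2(\R)$, while the compact group $\SU(2)$ contains the Hausdorff--Banach--Tarski free group of rotations. Consequently, if $G$ contains $m$ pairwise commuting nonabelian simple subgroups, then $(F_2)^m\hookrightarrow G$; conversely, the whole difficulty of the upper bound is to manufacture such a commuting family out of an abstract embedding of $(F_2)^m$.

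For the lower bound I would start from a maximal set $\alpha_1,\dots,\alpha_N$ of pairwise strongly orthogonal roots, $N=\operatorname{sork}\Phi$. Strong orthogonality of $\alpha_i,\alpha_j$ forces $[\gog_{\alpha_i},\gog_{\pm\alpha_j}]=0$, so the three-dimensional simple subalgebras $\mathfrak{s}_i=\langle E_{\alpha_i},H_{\alpha_i},E_{-\alpha_i}\rangle$ pairwise commute and generate a copy of $(\mathfrak{sl}_2)^N$ inside $\gog$. In case~\eqref{item:complex-structure} this subalgebra is already defined over the ground field and exponentiates to $N$ commuting copies of $\SL_2(\C)$, giving $\nu(G)\ge\operatorname{sork}\Phi(\gog_0)$ at once. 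In the absolutely simple case one must realise commuting three-dimensional subalgebras \emph{inside $\gog_0$}: choosing a $\theta$-stable Cartan, one sorts the $\alpha_i$ into compact-imaginary, real and complex roots and, after a Cayley transform, extracts from each root (or each conjugate pair) an $\mathfrak{su}(2)$ or an $\mathfrak{sl}_2(\R)$ lying in $\gog_0$. For $G=\SO(p,q)$ this is transparent through orthogonal block decompositions of $\R^{p,q}$: blocks of signature $(2,2)$, $(4,0)$ and $(0,4)$ each carry two commuting rank-one factors, and the optimal packing realises $\operatorname{sork}\Phi$ factors \emph{unless} both $p$ and $q$ are odd, in which case parity forces one block of signature $(3,0)$ and one of $(0,3)$, each carrying a single factor, so that only $\operatorname{sork}\Phi-1$ commuting factors survive.

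For the upper bound, suppose $(F_2)^m\hookrightarrow G$. Passing to the adjoint group (legitimate since $(F_2)^m$ is centreless) and replacing each free factor by the identity component of its Zariski closure, I would obtain $m$ pairwise commuting connected algebraic subgroups $H_1,\dots,H_m$. Each $H_i$ contains a nonabelian free group, hence is not virtually solvable, so its reductive Levi part is not a torus and supplies a three-dimensional simple subalgebra $\mathfrak{s}_i\subseteq\gog_0$; as the $H_i$ commute, so do the $\mathfrak{s}_i$. The remaining task is purely Lie-theoretic: to bound the number of pairwise commuting three-dimensional simple subalgebras. Complexifying, using $\mathfrak{su}(2)\otimes\C\cong\mathfrak{sl}_2(\R)\otimes\C\cong\mathfrak{sl}_2(\C)$, and placing the commuting semisimple coroot elements in a common Cartan $\goh$, one argues that the resulting copies may be conjugated to root $\mathfrak{sl}_2$'s attached to pairwise strongly orthogonal roots, whence $m\le\operatorname{sork}\Phi$. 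This settles parts~\eqref{item:complex-structure} and~\eqref{item:real-form}.

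The main obstacle is the sharp constant in part~\eqref{item:sopq}, where the abstract count $m\le\operatorname{sork}\Phi$ is \emph{not} attained and the upper bound must be improved by one. Here I would analyse the orthogonal representation of $\mathfrak{s}_1\oplus\cdots\oplus\mathfrak{s}_m$ on $\R^{p,q}$ directly: any $\mathfrak{s}_i$ contributing a free group must act with an isotypic summand of signature forcing it to be genuinely nonabelian, and a parity bookkeeping of the positive and negative dimensions consumed---together with the fact that an odd leftover dimension in each of the definite subspaces cannot be absorbed into a two-factor block---shows that at most $\operatorname{sork}\Phi-1$ such subalgebras coexist when $p,q$ are odd with $4\mid p+q$. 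Matching this with the block construction of the second paragraph closes the gap. I expect this signature-parity argument, and the careful verification that the Zariski-closure factors genuinely descend to \emph{commuting} three-dimensional subalgebras over $\R$, to be the delicate points of the proof.
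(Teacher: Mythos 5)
Your overall skeleton (strongly orthogonal roots for the lower bound, Zariski closures of the $F_2$-factors for the upper bound) matches the paper's, but the upper bound in parts~\eqref{item:complex-structure} and~\eqref{item:real-form} contains a genuine gap at exactly the point where the paper has to work hardest. You claim that, after placing the commuting coroot elements in a common Cartan subalgebra, the commuting copies of $\mathfrak{sl}_2$ ``may be conjugated to root $\mathfrak{sl}_2$'s attached to pairwise strongly orthogonal roots.'' This is false as a statement about individual subalgebras: a semisimple subalgebra of a semisimple Lie algebra need not be \emph{regular}, i.e.\ need not be conjugate into a sum of root spaces at all. The principal $\mathfrak{sl}_2$, or $\mathfrak{so}(n)\subset\mathfrak{sl}_n(\C)$, are standard examples of such $S$-subalgebras, and a product $(\mathfrak{sl}_2)^m$ can likewise sit inside $\gog$ non-regularly. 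What is true --- and is the technical core of the paper (its Proposition on regular versus arbitrary embeddings) --- is only the \emph{counting} statement: the maximal $m$ for regular embeddings of $(\mathfrak{sl}_2)^m$ equals the maximal $m$ for arbitrary embeddings. The paper's proof of this is an induction on $\dim\gog$ that runs through Dynkin's classification of maximal $S$-subalgebras: tables of maximal $S$-subalgebras of the exceptional algebras, and for classical $\gog$ a case analysis of non-regular maximal subalgebras (Kronecker-product subalgebras and irreducible representations preserving a bilinear form), checking in every case that the regular bound is not beaten. Your proposal offers no substitute for this step, so the inequality $m\le\operatorname{sork}\Phi$ is asserted rather than proved.

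Two further points. First, your lower bound in case~\eqref{item:real-form} extracts ``from each root (or each conjugate pair)'' a rank-one subalgebra of $\gog_0$; but a conjugate pair of complex roots $\{\alpha,\alpha^\sigma\}$ with $\alpha^\sigma\neq\pm\alpha$ yields only $\mathfrak{sl}_2(\C)$ viewed as a real algebra, which has free subgroup rank one, so each such pair contributes a single factor and your count falls short of $\operatorname{sork}\Phi(\gog)$ whenever complex pairs occur. The paper circumvents this by working with a maximally compact Cartan subalgebra (split case and compact-Cartan case done directly), and by handling the three remaining families $\mathfrak{su}^*(2n)$, $\mathfrak{so}(p,q)$ with $p+q\equiv 2 \bmod 4$, and $\mathfrak{e}_{6(-26)}$ through their maximal compact subgroups, whose strong orthogonal rank happens to agree with that of $\gog$. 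Second, your signature-parity bookkeeping for part~\eqref{item:sopq} is a different and not implausible route, but as sketched it implicitly assumes the commuting subalgebras act on $\R^{p+q}$ through small blocks; an $\mathfrak{sl}_2(\R)$ or $\mathfrak{su}(2)$ factor can act through a high-dimensional irreducible summand, and the bookkeeping must cover that. The paper instead notes that a full-rank semisimple subalgebra is automatically regular, rules out $\sigma$ swapping two $A_1$-factors (that would create a real $\mathfrak{sl}_2(\C)$ of free rank one), and then applies Cayley transforms to the real roots to produce a compact Cartan subalgebra of $\mathfrak{so}(p,q)$ --- a contradiction since none exists for $p,q$ odd.
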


One may argue that the exceptional case~\eqref{item:sopq} had to be expected in view of the accidental isomorphism \(\mathfrak{so}(3,1) \cong \mathfrak{sl}_2(\C)\).  For we have \(\nu(\operatorname{SL}_2(\C)) = 1\) by~\eqref{item:complex-structure} whereas \(\operatorname{sork}(D_2) = \operatorname{sork}(A_1 \times A_1) = 2\).  To compute \(\nu(G)\) explicitly, it remains to list the strong orthogonal ranks of irreducible root systems.

\begin{proposition}\label{prop:sork-of-irred}
The strong orthogonal rank of an irreducible root system~\(\Phi\) is given by the following table.
\[
\renewcommand{\arraystretch}{1.2}
\begin{array}{lccccccccccc}
\hline
\textup{\textbf{Type of }} \bm{\Phi} & A_{2r-1} & A_{2r} & B_r & C_r & D_{2r+1} & D_{2r} & E_6 & E_7 & E_8 & F_4 & G_2\\
\textup{\textbf{sork}} \,\bm{\Phi} & r & r & r & r & 2r & 2r & 4 & 7 & 8 & 4 & 2  \\
\hline
\end{array}
\]
\end{proposition}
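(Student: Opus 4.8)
The plan is to reduce the computation to a statement about \emph{mutually orthogonal} roots whenever possible, and to treat the simply-laced and multiply-laced types separately.

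I would begin with two general observations. First, pairwise strongly orthogonal roots are in particular pairwise orthogonal, hence linearly independent, so $\operatorname{sork}(\Phi) \le \operatorname{rank}(\Phi) = \ell$ for every $\Phi$; this already gives the correct upper bound in types $B_r, C_r, D_{2r}, E_7, E_8, F_4, G_2$, where the table value equals $\ell$, reducing those cases to exhibiting a set of the right size. Second, if $\Phi$ is \emph{simply laced} (types $A, D, E$), then for orthogonal roots $\alpha \perp \beta$ one has $|\alpha \pm \beta|^2 = |\alpha|^2 + |\beta|^2 = 2|\alpha|^2$, which is not the squared length of any root; hence $\alpha \pm \beta \notin \Phi$ and strong orthogonality coincides with orthogonality. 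For these types $\operatorname{sork}(\Phi)$ is therefore exactly the maximal number of pairwise orthogonal roots, equivalently the maximal rank of an $A_1^k$ subsystem.

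Next I would dispatch the classical types in coordinates. In type $A_{n-1}$, writing roots as $e_i - e_j$, two of them are orthogonal precisely when their index pairs are disjoint, so an orthogonal family is a matching on $\{1,\dots,n\}$ and has at most $\lfloor n/2 \rfloor$ members, realized by $e_1 - e_2, e_3 - e_4, \dots$; this yields $r$ for both $A_{2r-1}$ and $A_{2r}$. In type $D_n$ (roots $\pm e_i \pm e_j$) two roots sharing exactly one index are never orthogonal, a given index pair $\{i,j\}$ supports exactly the two mutually orthogonal roots $e_i \pm e_j$, and disjoint pairs give orthogonal roots; hence the supports form a matching and $\operatorname{sork}(D_n) = 2\lfloor n/2 \rfloor$, which is $2r$ for both $D_{2r}$ and $D_{2r+1}$. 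For the multiply-laced types it then remains only to produce $\ell$ strongly orthogonal roots: the family $\{2e_1, \dots, 2e_r\}$ for $C_r$; the family $e_{2k-1} \pm e_{2k}$, completed by $e_r$ when $r$ is odd, for $B_r$; the $D_4$-subsystem quadruple $e_1 \pm e_2,\ e_3 \pm e_4$ for $F_4$; and a long root together with an orthogonal short root for $G_2$. In each multiply-laced case I would check $\alpha \pm \beta \notin \Phi$ directly, using that sums such as $2e_i$, or vectors with more nonzero coordinates than any root allows, cannot be roots.

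Finally, for the exceptional simply-laced types I would argue through subsystems. Since $E_8 \supset D_8$ and $E_7 \supset A_1 \times D_6$, both visible by deleting a node from the extended Dynkin diagram, the orthogonal families found above inside the $D$-factors produce $8$ and $7$ pairwise orthogonal roots respectively, matching the rank bound. For $E_6$ the subsystem $A_1 \times A_5$ gives $1 + 3 = 4$ pairwise orthogonal roots, so $\operatorname{sork}(E_6) \ge 4$; the main obstacle is the reverse inequality $\operatorname{sork}(E_6) \le 4$, equivalently that $E_6$ admits no $A_1^5$ subsystem, since here linear independence only yields the weaker bound $\le 6$. I expect to settle this by a finite inspection: any set of pairwise orthogonal roots generates a subsystem of type $A_1^m$, and running through the Borel--de Siebenthal classification of closed subsystems of $E_6$ — whose relevant large members are $A_1 \times A_5$, $A_2 \times A_2 \times A_2$ and $D_5$ — one verifies that the largest power of $A_1$ occurring is $A_1^4$. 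Assembling the upper and lower bounds across all types then yields the table.
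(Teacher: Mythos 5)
Your proposal is correct, and it supplies considerably more than the paper itself records: the paper's entire justification for this proposition is the single sentence following it, namely that the values are found routinely by exhibiting maximal sequences of pairwise strongly orthogonal roots in the explicit coordinate descriptions of Humphreys, with no upper-bound argument written down at all. Your two structural observations are exactly what makes the verification genuinely routine. The rank bound \(\operatorname{sork}(\Phi) \le \operatorname{rank}(\Phi)\) disposes of the upper bounds in types \(B_r\), \(C_r\), \(D_{2r}\), \(E_7\), \(E_8\), \(F_4\), \(G_2\), and the length argument showing that in a simply-laced system orthogonality already implies strong orthogonality reduces types \(A\), \(D\), \(E\) to counting pairwise orthogonal roots, which your matching arguments in coordinates settle for \(A_n\) and \(D_n\). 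Your explicit families for \(B_r\), \(C_r\), \(F_4\), \(G_2\) and the subsystems \(D_8 \subset E_8\), \(A_1 \times D_6 \subset E_7\), \(A_1 \times A_5 \subset E_6\) all check out. The only case in which anything nontrivial remains is the upper bound \(\operatorname{sork}(E_6) \le 4\), which the paper silently subsumes under ``routine''; your subsystem inspection is the right way to settle it.

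One imprecision there is worth repairing. Borel--de Siebenthal theory proper produces the maximal closed subsystems of \emph{full} rank, namely \(A_1 \times A_5\) and \(A_2 \times A_2 \times A_2\); the subsystem \(D_5\) does not arise this way, since it has rank \(5\). A hypothetical \(A_1^5\), also of rank \(5\), need not be contained in any full-rank proper closed subsystem --- indeed it could not be, as those contain at most four pairwise orthogonal roots --- so quoting Borel--de Siebenthal alone does not make your inspection exhaustive. What completes the argument is the complementary fact that a closed subsystem of non-full rank is contained in \(\Phi \cap H\) for some hyperplane \(H\), and every such parabolic-type subsystem is Weyl-conjugate to a Levi subsystem obtained by deleting nodes from the ordinary Dynkin diagram; the corank-one Levis of \(E_6\) are \(D_5\), \(A_5\), \(A_1 \times A_4\), and \(A_2 \times A_1 \times A_2\), each containing at most \(4\) pairwise orthogonal roots. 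So your list of relevant maximal subsystems is the correct one, but its exhaustiveness rests on this Levi step in addition to the extended-diagram procedure.
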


One finds these values routinely by exhibiting maximal sequences of pairwise strongly orthogonal roots in \(\Phi\), for example using the explicit descriptions given in~\cite{Humphreys:Lie-algebras}*{Section~12.1, p.\,63}.  Note that the strong orthogonal rank equals the rank of the root system whenever the Dynkin diagram has no symmetries.  For Dynkin diagrams with symmetries, the strong orthogonal rank is strictly smaller than the rank with the exception of \(D_{2r}\) which is also the problematic type occurring in case~\eqref{item:sopq}.

\begin{example}
Up to coverings, the complete list of connected simple Lie groups~\(G\) with \(\nu(G) = 1\) is given by
\[ \mathrm{SU}(2),\, \mathrm{SL}_2(\R),\, \mathrm{SL}_2(\C),\, \mathrm{SU}(3),\, \mathrm{SU}(2,1),\, \mathrm{SL}_3(\R),\, \mathrm{SL}_3(\C). \]
\end{example}

\begin{example}
An example sequence of Lie groups in which also the exceptional case becomes relevant is given by the isometry groups of hyperbolic space \(G = \mathrm{SO}(n,1)\).  Here Theorem~\ref{thm:nu-of-simple} gives explicitly \(\nu(\mathrm{SO}(n,1)) = \lfloor \frac{n}{2} \rfloor\).
\end{example}

Now let \(G\) be a general Lie group.  To avoid trivialities like considering an arbitrary infinite group as a zero dimensional Lie group, we only assume that \(G\) is \emph{almost connected}, meaning it has finitely many path components.  We then have \(\nu(G) = \nu(G^0)\) where \(G^0\) is the unit component of \(G\) because \(\nu(G)\) remains unchanged when passing to finite index subgroups (Lemma~\ref{lemma:commensurable}).  So we may assume \(G\) is connected to begin with.  The kernel of the universal covering projection \(p \colon \widetilde{G} \rightarrow G\) is a normal and discrete subgroup in a path connected group, hence it is central.  Thus \(\widetilde{G}\) is a central extension of \(G\) and we have \(\nu(G) = \nu(\widetilde{G})\) by Proposition~\ref{prop:solvable-extensions}.  So we may assume \(G\) is simply-connected.  Let \(R \subset G\) be the \emph{solvable radical} of \(G\), meaning the maximal closed connected normal solvable subgroup.  The Levi-Malcev decomposition asserts that there exists a closed simply-connected semisimple subgroup \(S \subset G\) such that \(G=RS\) is a semidirect product~\cite{Onishchik-Vinberg:Lie-groups}*{Theorem~2, Chapter~6, p.\,284}.  Since solvable groups cannot contain \(F_2\), we have \(\nu(R) = 0\) so that Proposition~\ref{prop:solvable-extensions} implies \(\nu(G) = \nu(S)\).  Thus we may assume \(G\) is simply-connected semisimple.  In that case \(G\) decomposes as a direct product \(G = G_1 G_2 \cdots G_k\) of connected simple Lie groups.  By Proposition~\ref{prop:products}\,\eqref{item:direct-product}, we have \(\nu(G) = \nu(G_1) + \cdots + \nu(G_k)\).  Theorem~\ref{thm:nu-of-simple} completes the computation of \(\nu(G)\).

\medskip
For the convenience of the reader, we give an overview of the proof of Theorem~\ref{thm:nu-of-simple}.  We start with part~\eqref{item:complex-structure}.  To see the inequality \(\nu(G) \ge \operatorname{sork} \Phi(\mathfrak{g}_0)\), one notices that \(n\) strongly orthogonal roots in \(\Phi(\mathfrak{g}_0)\) span a closed subroot system \(\Sigma \subset \Phi(\mathfrak{g}_0)\) of type \((A_1)^n\).  Correspondingly, we obtain a subalgebra \((\mathfrak{sl}_2(\C))^n \subset \mathfrak{g}_0\) and hence a subgroup in \(G\) isogenous to \((\operatorname{SL}_2(\C))^n\).  This group in turn contains \((F_2)^n\).  The harder part is the reverse inequality \(\nu(G) \le \operatorname{sork} \Phi(\mathfrak{g}_0)\).  After factoring out the center, the adjoint representation realizes \(G\) as a complex linear algebraic group.  Every \(F_2\)-factor of a subgroup \((F_2)^n \subset G\) has non-solvable Zariski closure in this group, hence the corresponding Lie subalgebra of \(\mathfrak{g}_0\) contains a semisimple Levi subalgebra and in turn a copy of \(\mathfrak{sl}_2(\C)\).  In this manner, we obtain a subalgebra \((\mathfrak{sl}_2(\C))^n \subset \mathfrak{g}_0\).  However, not every semisimple subalgebra of a semisimple Lie algebra is \emph{regular} in the sense that it comes from a closed subroot system \(\Sigma \subset \Phi(\mathfrak{g}_0)\): just think of \(\mathfrak{so}(n) \subset \mathfrak{sl}_n\).  So it remains to see that regular maximal subalgebras of the form \((\mathfrak{sl}_2(\C))^n \subset \mathfrak{g}_0\) are overall maximal.  This is the statement of Proposition~\ref{prop:regular-versus-not}, and its verification forms a technical core part of the proof, as it requires a delicate study of Dynkin's classical work on the classification of semisimple subalgebras of semisimple Lie algebras~\citelist{\cite{Dynkin:subalgebras} \cite{Dynkin:maximal-subgroups}}.  Once the proposition is proven, we know that \(\Phi(\mathfrak{g}_0)\) contains a closed subroot system \(\Sigma\) of type \((A_1)^n\) and picking one root from each \(A_1\)-factor yields \(n\) pairwise strongly orthogonal roots in \(\Phi(\mathfrak{g}_0)\).

\smallskip
The proof, as just described, carries over to part~\eqref{item:real-form} of the theorem whenever \(\mathfrak{g}_0\) is a \emph{split real form} of \(\mathfrak{g}\), meaning for any Cartan decomposition \(\mathfrak{g}_0 = \mathfrak{k}_0 \oplus \mathfrak{p}_0\), there exists a Cartan subalgebra \(\mathfrak{h}_0\) of \(\mathfrak{g}_0\) contained in \(\mathfrak{p}_0\).  In general, we still obtain \(\operatorname{sork} \Phi(\mathfrak{g})\) as an upper bound for \(\nu(G)\) because \(G\) embeds into its complexification \(G_\C\).  For the lower bound, however, we need additional arguments.  We start with the case opposite to the split case, when \(\mathfrak{g}_0\) has a Cartan subalgebra \(\mathfrak{h}_0 \subset \mathfrak{k}_0\).  This means the complexified Cartan involution \(\theta\) acts trivially on the root system \(\Phi(\mathfrak{g})\) which becomes visible in a \emph{Vogan diagram} of \(\mathfrak{g}_0\) without arrows.  Therefore \(\theta\) leaves each root space \(\mathfrak{g}_\alpha\) invariant, and the root \(\alpha\) is termed \emph{compact} or \emph{noncompact} according to whether \(\theta_{|\mathfrak{g}_\alpha} = \mathrm{id}\) or \(\theta_{|\mathfrak{g}_\alpha} = -\mathrm{id}\).  Compact roots give rise to an \(\mathfrak{su}(2)\)-subalgebra and noncompact roots to an \(\mathfrak{sl}_2(\R)\)-subalgebra of \(\mathfrak{g}_0\).  Hence, \(n\) strongly orthogonal roots in \(\Phi(\mathfrak{g})\), of which \(k\) are compact and \((n-k)\) are noncompact, give rise to a regular semisimple subalgebra of \(\mathfrak{g}_0\) consisting of \(k\) different \(\mathfrak{su}(2)\)-ideals and \((n-k)\) different \(\mathfrak{sl}_2(\R)\)-ideals.  Since both \(\operatorname{SL}_2(\R)\) and \(\operatorname{SU}(2)\) contain \(F_2\) as subgroup, we obtain a subgroup \((F_2)^n\) in \(G\).

\smallskip
Up to isomorphism, the real simple Lie algebras \(\mathfrak{g}_0\) which are neither split nor have a compact Cartan subalgebra fall in one of only three different families, two of which can easily be dealt with by hand.  The only remaining case needing special attention is \(\mathfrak{g}_0 \cong \mathfrak{so}(p,q)\) for odd integers \(p\) and \(q\) with \(p+q = 0 \text{ mod } 4\).  This brings us to part~\eqref{item:sopq} of the theorem.  In this case, we may assume \(G = \operatorname{SO}^0(p,q)\)  and the maximal compact subgroup \(K = \operatorname{SO}(p) \times \operatorname{SO}(q)\) has \(\nu(K) = \frac{p+q}{2} - 1 = \operatorname{sork} \Phi(\mathfrak{g}) - 1\) by part~\eqref{item:real-form}.  Hence it remains to exclude the possibility \(\nu(G) = \operatorname{sork} \Phi(\mathfrak{g})\).  If there was \((F_2)^n \subset G\) with \(n = \operatorname{sork} \Phi(\mathfrak{g})\), we would obtain a closed \(\theta\)-invariant subroot system \(\Sigma \subset \Phi(\mathfrak{g})\) of type \((A_1)^n\) from the Zariski closure of \((F_2)^n\) because semisimple subalgebras of full rank are automatically regular (Proposition~\ref{prop:full-rank-regular}).  Moreover, \(\theta\) cannot swap any two \(A_1\)-factors of \(\Sigma\) because then this \((A_1)^2\)-subsystem would correspond to an \(\mathfrak{sl}_2(\C)\)-subalgebra considered as real algebra which has free subgroup rank one by part~\eqref{item:complex-structure}.  So the subgroup corresponding to \(\Sigma\) could have free subgroup rank at most~\(n-1\), which is a contradiction.  Hence \(\theta\) maps each root of \(\Sigma\) either to itself or to its opposite and the roots are called \emph{imaginary} and \emph{real} accordingly.  By a sequence of \emph{Cayley transforms} corresponding to the real roots, one can find a new Cartan subalgebra \(\mathfrak{h}_0\) of \(\mathfrak{g}_0\) in which all roots have become imaginary.  But that would mean \(\mathfrak{h}_0 \subset \mathfrak{k}_0\), which is absurd because \(\mathfrak{so}(p,q)\) has no compact Cartan subalgebra.

\medskip
A notion related to free subgroup rank can be found in work of Ab\'ert \cite{Abert:non-commuting}*{Corollary~6}: every linear group \(G\) has a finite maximal number of pairwise commuting nonabelian subgroups.  Of course \(\nu(G)\) is a lower bound for this number.  More precisely, the Tits alternative shows that \(\nu(G)\) is the maximal number of pairwise commuting finitely generated non-virtually solvable subgroups \(\Gamma_i\) of \(G\) with pairwise trivial intersection.  The Breuillard--Gelander uniform sharpening of the Tits alternative~\cite{Breuillard-Gelander2008} gives moreover constants \(m_i\) for each \(\Gamma_i\), such that for any symmetric generating set \(\Sigma_i\) of \(\Gamma_i\), two words in at most \(m_i\) letters from \(\Sigma_i\) generate a free subgroup \(F_2 \le \Gamma_i\).  These remarks still apply for Lie groups \(G\) because the relevant semisimple group \(S\), associated with \(G\) by the Levi-Malcev decomposition above, is a central extension of a linear group by means of the adjoint representation.  In the particular cases \(G = \mathrm{GL}_d(\mathbb{C})\) or \(G = \mathrm{SL}_d(\mathbb{C})\), computing \(\nu(G)\) is sort of the reverse problem to determining the minimal dimension of a faithful representation for \((F_2)^n\).  It is an old problem whether a given group \(G\) is linear, or more precisely to find the minimal dimension \(m_K(G)\) of a faithful representation of \(G\) over a field \(K\).  Button~\cite{Button} contributes to this question in the context of right angled Artin groups and free-by-cyclic groups.  For example, \cite{Button}*{Corollary 2.4} gives \(m_K((F_2)^2) = 4\).  By definition, \(m_\mathbb{C}((F_2)^n)\leq d\) if and only if \(\nu(\mathrm{SL}_d(\mathbb{C}))\geq n\). Hence \(\nu(\mathrm{SL}_3(\mathbb{C})) = 1\).  More generally, one can show that  \(m_K\) is additive for products if $K$ is algebraically closed.  If in addition \(\mathrm{char}(K) = 0\), we thus obtain \(m_K((F_2)^n)=2n\) (cf.\,Example \ref{example:nrank}\ref{item:sl2z}).  Therefore the results \(\nu(\mathrm{SL}_{2r}(\mathbb{C}))=\nu(\mathrm{SL}_{2r+1}(\mathbb{C}))= r\) from Theorem \ref{thm:nu-of-simple} and Proposition \ref{prop:sork-of-irred} are as expected.

\medskip
The outline of this article is as follows.  Section~\ref{section:free-rank} gives examples and general properties of the invariant \(\nu(G)\).  Section~\ref{section:lie-algebras} collects some results on semisimple Lie algebras and their semisimple subalgebras which we will use in the subsequent discussion.  Section~\ref{section:maximal} is dedicated to the proof of Proposition~\ref{prop:regular-versus-not} which, as explained, is key for our considerations.  Finally, Section~\ref{section:simple} gives the detailed proof of Theorem~\ref{thm:nu-of-simple} along the above lines.  The authors acknowledge financial support by the Deutsche Forschungsgemeinschaft (DFG, German Research Foundation) 281869850 (RTG 2229).  The first author is moreover grateful to the Swiss National Science Foundation for support in the form of grant P400P2-191107 while the second author is grateful for additional support by DFG 338540207 (SPP 2026).  We are indebted to Yves de Cornulier and Dominik Francoeur for helpful discussions and to Martin Bridson whose talk at the Oberwolfach workshop ``Manifolds and Groups'' in February 2020 gave the impetus to this article.

\section{Free subgroup rank}
\label{section:free-rank}

In this section, we collect some properties of the free subgroup rank which we will exploit in our proofs.  We will refer to a subgroup \((F_2)^n \le G\) as a \emph{free \(n\)-torus} in \(G\).

\begin{example} \label{example:nrank}
To gain familiarity with \(\nu(G)\), we start with some examples.
  \begin{enumerate}[(i)]
%  \item If \(G\) is abelian, then \(\nu(G) = 0\).
  \item We have \(\nu(G) = 0\) if \(G\) is an \emph{amenable} discrete group.  Whether the converse holds, was a famous question by von Neumann, answered in the negative by Ol'shanskii in 1980 using Tarski monsters.  Later, finitely presented and even type \(F_\infty\) counterexamples arose from the work of N.\,Monod, Y.\,Lodha, and J.\,T.\,Moore \cites{Monod:groups of piecewise projective homeomorphisms, Lodha-Moore:nonamenable finitely presented group}.
\item On the other hand, the Tits alternative implies \(\nu(G) \ge 1\) if \(G\) is a finitely generated non-virtually solvable matrix group over some field.
\item \label{item:sl2z} As an example of the last point, we have \(\nu(\mathrm{SL}_2(\mathbb{Z}))=1\).  One can play ping pong to show that \(\begin{psmallmatrix} 1 & 2 \\ 0 & 1 \end{psmallmatrix}\) and \(\begin{psmallmatrix} 1 & 0 \\ 2 & 1 \end{psmallmatrix}\) generate a free subgroup of finite index and use Lemma \ref{lemma:commensurable} below.
\item It is well-known that every non-elementary \emph{Gromov hyperbolic} group \(G\) contains a nonabelian free group.  On the other hand, \(G\) cannot contain~\(\Z^2\), hence \(\nu(G) = 1\).
\item \label{item:slnz} We have \(\nu(\mathrm{SL}_n(\Z)) = \lfloor \frac{n}{2} \rfloor\).  The ``\(\ge\)''-part follows from fitting the virtually free groups \(\mathrm{SL}_2(\Z)\) along the diagonal.  The ``\(\le\)''-part follows for example as a special case of Theorem~\ref{thm:nu-of-simple}\,\eqref{item:real-form}.
    \item We have \(\nu(\operatorname{Aut}(F_n)) = 2n-3\). In fact, M.\,Bridson and R.\,Wade classify maximal free tori in \(\operatorname{Aut}(F_n)\) in yet unpublished work \cite{Bridson}.
    \end{enumerate}
\end{example}

\begin{lemma} \label{lemma:commensurable}
If \(G_1 \le G_2\), then \(\nu(G_1) \le \nu(G_2)\) with equality if \([G_2 : G_1] < \infty\).
\end{lemma}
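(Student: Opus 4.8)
The inequality $\nu(G_1) \le \nu(G_2)$ is immediate from the definition: any embedding $(F_2)^n \hookrightarrow G_1$ composes with the inclusion $G_1 \le G_2$ to give $(F_2)^n \hookrightarrow G_2$, so $\nu$ is monotone under inclusion. The real content is the reverse inequality $\nu(G_2) \le \nu(G_1)$ under the hypothesis $[G_2 : G_1] < \infty$, and my plan is to reduce it to the purely group-theoretic claim that \emph{every finite index subgroup of $(F_2)^n$ contains a copy of $(F_2)^n$}.

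Granting this claim, I would argue as follows. Pick an embedded free $n$-torus $(F_2)^n \cong H \le G_2$ with $n \le \nu(G_2)$ (any finite $n$ works; the case $\nu(G_2) = \infty$ is handled by running the argument for all $n$ in turn). Set $K = H \cap G_1$. Since $[H : K] \le [G_2 : G_1] < \infty$, the subgroup $K$ has finite index in $H \cong (F_2)^n$, so by the claim it contains a copy of $(F_2)^n$; as $K \le G_1$, this produces $(F_2)^n \hookrightarrow G_1$ and hence $\nu(G_1) \ge n$, giving $\nu(G_1) \ge \nu(G_2)$.

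To prove the claim, I would write $H = H^{(1)} \times \cdots \times H^{(n)}$ with each factor $H^{(i)} \cong F_2$, and for a finite index subgroup $K \le H$ I would pass to the intersections $K_i = K \cap H^{(i)}$. Each satisfies $[H^{(i)} : K_i] \le [H : K] < \infty$, so $K_i$ is a finite index subgroup of $F_2$; by Nielsen--Schreier it is then free of rank $1 + [H^{(i)} : K_i] \ge 2$, in particular nonabelian, and so contains a copy of $F_2$. Because the factors $H^{(i)}$ commute pairwise and meet trivially, the subgroups $K_i$ generate their internal direct product $K_1 \times \cdots \times K_n \le K$, which therefore contains $(F_2)^n$.

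The only substantive input is the Nielsen--Schreier theorem, that a finite index subgroup of a free group is free with rank given by the Schreier index formula. The one point demanding care is that $K$ itself is in general \emph{not} a direct product, so one cannot split it factorwise directly; passing instead to the intersections $K_i$ with the individual free factors is what makes the commuting structure manifest and closes the argument.
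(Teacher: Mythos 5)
Your proof is correct and follows essentially the same route as the paper: intersect each \(F_2\)-factor of the torus with \(G_1\), observe these intersections have finite index and are therefore nonabelian free (you via Nielsen--Schreier and the Schreier index formula, the paper via M.\,Hall's lemma), and take their internal direct product inside \(G_1\). Your factorwise intersection \(K_i = K \cap H^{(i)}\) coincides with the paper's \(F_{2,j} \cap G_1\), so the two arguments are the same in substance.
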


\begin{proof}
  Only the additional assertion needs a little argument:  For any free torus $(F_2)^n\leq G_2$, let $F_{2,j}\leq (F_2)^n$ denote the various factors for $1\leq j\leq n$.  Then by M.\,Hall's lemma~\cite{Hall:topology-for-free-groups}*{Section 2, Property 1}, $F_{2, j}\cap G_1$ is a finite index subgroup of \(F_{2,j}\), hence a nonabelian free group for all $1\leq j\leq n$. Consequently, $G_1$ contains $(F_2)^n$ as subgroup as well.
\end{proof}

In particular, \(\nu(G)\) is an invariant of the abstract commensurability class of \(G\).  Since the \(n\)-th Betti number of \((F_2)^n\) equals \(2^n\), the lemma also implies that if \(G\) has torsion-free subgroups of finite index, then
\[ \nu(G) \le \operatorname{vcd}(G) \]
for the \emph{virtual cohomological dimension} of \(G\).  Moreover:

\begin{proposition} \label{prop:products}
  For the direct and free products of groups, we have
  \begin{enumerate}[(i)]
  \item \label{item:direct-product} \(\nu(G_1 \times G_2) = \nu(G_1) + \nu(G_2)\),
  \item \label{item:free-product} \(\nu(G_1 \ast G_2) = \max\{1, \nu(G_1),\nu(G_2)\}\) provided  \(G_1\) and \(G_2\) are nontrivial and not both of order two.
  \end{enumerate}
\end{proposition}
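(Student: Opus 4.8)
The plan is to treat the two parts separately, since the direct-product statement rests on the structure of centralizers in free groups while the free-product statement rests on the Kurosh subgroup theorem.

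For part~\eqref{item:direct-product}, the inequality $\nu(G_1 \times G_2) \ge \nu(G_1) + \nu(G_2)$ is immediate: a free $a$-torus in $G_1$ and a free $b$-torus in $G_2$ together span a free $(a+b)$-torus in $G_1 \times G_2$. For the reverse inequality I would start from a free $n$-torus $F^{(1)} \times \cdots \times F^{(n)} \le G_1 \times G_2$ and analyze each factor $F^{(j)}$ through the two projections $p_1, p_2$. The key local observation is that for a single nonabelian free subgroup $F \le G_1\times G_2$, at least one projection $p_i|_F$ is injective. Indeed, the kernels $K_1 = F \cap (\{1\}\times G_2)$ and $K_2 = F \cap (G_1\times\{1\})$ commute elementwise and meet trivially; since centralizers in a free group are infinite cyclic, if both kernels were nontrivial they would be forced into a common infinite cyclic subgroup, where two nontrivial subgroups cannot intersect trivially. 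Hence one kernel vanishes. This lets me color each index $j$ by a side $i(j) \in \{1,2\}$ on which $p_{i(j)}$ embeds $F^{(j)}$. Finally I would observe that the images $p_i(F^{(j)})$ belonging to a fixed side are pairwise commuting \emph{centerless} groups, so they automatically generate their internal direct product: any element of $A_i \cap \langle A_{i'} : i'\neq i\rangle$ is central in $A_i$, hence trivial. Thus the factors colored $1$ yield a free torus in $G_1$ and those colored $2$ a free torus in $G_2$, giving $n \le \nu(G_1) + \nu(G_2)$.

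For part~\eqref{item:free-product}, the lower bound $\nu(G_1 \ast G_2) \ge \max\{1,\nu(G_1),\nu(G_2)\}$ follows from Lemma~\ref{lemma:commensurable} applied to the inclusions $G_i \hookrightarrow G_1 \ast G_2$, together with the classical fact that a free product of two nontrivial groups contains $F_2$ unless both factors have order two---precisely the excluded case $\mathbb Z/2 \ast \mathbb Z/2 \cong D_\infty$. The substance is the upper bound, and here the difficulty is controlling how a free torus can sit inside a free product. The plan is to invoke the Kurosh subgroup theorem: any subgroup $H \le G_1 \ast G_2$ splits as a free product of a free group with subgroups conjugate into $G_1$ or $G_2$. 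I would then show that for $n \ge 2$ the group $(F_2)^n$ admits no such nontrivial splitting, i.e.\ it is freely indecomposable and not free; consequently the Kurosh decomposition degenerates to a single factor and $(F_2)^n$ must be conjugate into $G_1$ or into $G_2$, yielding $n \le \max\{\nu(G_1),\nu(G_2)\}$. For $n \le 1$ the bound is trivial since the right-hand side is at least $1$.

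Establishing free indecomposability of $(F_2)^n$ for $n \ge 2$ is the main obstacle, and I expect to argue via centralizers once more. Writing $(F_2)^n = P \times Q$ with $P = F_2$ and $Q = (F_2)^{n-1}$ nonabelian, suppose for contradiction $(F_2)^n = A \ast B$ nontrivially. For $1 \neq p \in P$ the centralizer contains the nonabelian group $Q$, so it is not infinite cyclic; since in a free product only elements conjugate into a factor have non-cyclic centralizers, $p$ lies in some conjugate $g A_0 g^{-1}$ of a factor, and the centralizer of any nontrivial element of a factor stays inside that factor, so $Q \le g A_0 g^{-1}$. Feeding a nontrivial $q \in Q$ back through the same argument forces $P \le g A_0 g^{-1}$ as well, whence the whole group $(F_2)^n = \langle P, Q\rangle$ lies in a conjugate of a single factor. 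Composing with the retraction onto the complementary factor then forces that complementary factor to be trivial, contradicting nontriviality. This completes the free-product case once the bookkeeping of the Kurosh decomposition is made precise.
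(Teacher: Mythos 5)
Your proposal is correct and follows essentially the same route as the paper: for part~(i) the same projection--kernel analysis (two nontrivial commuting elements of a free group would lie in a common cyclic subgroup, forcing one kernel to vanish, and pairwise commuting centerless images then assemble into an internal direct product), and for part~(ii) the Kurosh subgroup theorem forcing a free \(n\)-torus with \(n \ge 2\) to be conjugate into a factor. The only difference is that you spell out the free indecomposability of \((F_2)^n\) for \(n\ge 2\) via centralizer theory in free products---a step the paper's ``therefore'' after Kurosh leaves implicit---which is a useful detail but not a different approach.
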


\begin{proof}
  The ``\(\ge\)''-part of \eqref{item:direct-product} is trivial.  For the ``$\leq$''-part, suppose there is a subgroup $K\leq G_1 \times G_2$ isomorphic to \(F_2\) that projects to both $G_1$ and $G_2$ with nontrivial kernel.  Picking a nontrivial element from each kernel gives two nontrivial commuting elements in \(K\) which do not lie in a cyclic subgroup of \(K\).  This is a contradiction to \(K\) being nonabelian free.  Hence each \(F_2\)-subgroup in \(G_1 \times G_2\) projects injectively either to \(G_1\) or to \(G_2\) (or to both).  If two commuting subgroups \(K_1\) and \(K_2\) of \(G_1 \times G_2\) project injectively into the same factor \(G_i\), then their images still commute.  If \(K_1\) and \(K_2\) are in addition nonabelian free, then the two images must have trivial intersection because nonabelian free groups are center-free.  Altogether, we conclude that for any subgroup \((F_2)^n \leq G_1 \times G_2\), there exists \(0 \le k \le n\) such that \((F_2)^k\) is a subgroup of \(G_1\) and \((F_2)^{n-k}\) is a subgroup of \(G_2\).  This shows~\eqref{item:direct-product}.

  It is again trivial that \(\nu(G_1 \ast G_2) \ge \max\{\nu(G_1),\nu(G_2)\}\).  By the Kurosh subgroup theorem, any subgroup of \(G_1 \ast G_2\) is itself a free product whose free factors consist of a free group and of conjugates of subgroups of \(G_1\) and \(G_2\).  Therefore any free \(n\)-torus in \(G_1 \ast G_2\) is either one-dimensional or conjugate to a subgroup of \(G_1\) or \(G_2\).  The assumptions assure moreover that \(G_1 \ast G_2\) contains a nonabelian free subgroup.  This shows~\eqref{item:free-product}.
\end{proof}

As we saw in the introduction, it is essentially a consequence of the following general proposition that the free subgroup rank of a Lie group equals the free subgroup rank of any Levi subgroup.

\begin{proposition} \label{prop:solvable-extensions}
Consider an extension 
\[ 1 \longrightarrow N \xrightarrow{\ i \ } G \xrightarrow{\ p \ } Q \longrightarrow 1 \]
of an arbitrary group $Q$ by a group $N$ with \(\nu(N) = 0\). Then $\nu(G)\leq\nu(Q)$ with equality if the extension is split or central.
\end{proposition}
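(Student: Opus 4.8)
The plan is to prove the inequality \(\nu(G) \le \nu(Q)\) in full generality first, and then to upgrade it to an equality separately in the split and the central case.

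For the inequality, I would start with an arbitrary free \(n\)-torus \((F_2)^n = F_{2,1} \times \cdots \times F_{2,n} \le G\) and analyse the restriction of \(p\) to it. The first step is to show that \(p\) is injective on each factor: since \(N\) is normal, \(F_{2,j} \cap N\) is a normal subgroup of \(F_{2,j}\) contained in \(N\), so \(\nu(F_{2,j} \cap N) \le \nu(N) = 0\) by Lemma~\ref{lemma:commensurable}; but a nontrivial normal subgroup of a nonabelian free group is itself nonabelian free (if it has finite index this is Nielsen--Schreier, and a nontrivial normal subgroup of infinite index is infinitely generated), hence would contain \(F_2\). Thus \(F_{2,j} \cap N = 1\). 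The second step promotes this to injectivity of \(p\) on all of \((F_2)^n\): writing \(K = (F_2)^n \cap N\), which is normal, a commutator computation gives \(K = 1\). Indeed, for \(k = (k_1, \dots, k_n) \in K\) and \(g\) in the \(j\)-th factor, the commutator \([g, k]\) lies in \(K\) by normality and equals \((1, \dots, g k_j g^{-1} k_j^{-1}, \dots, 1) \in F_{2,j}\), hence lies in \(K \cap F_{2,j} = 1\); as \(F_{2,j}\) is centerless, \(k_j = 1\). So \(p\) embeds \((F_2)^n\) into \(Q\), giving \(\nu(Q) \ge n\) and therefore \(\nu(G) \le \nu(Q)\).

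The split case is immediate: a splitting \(\sigma \colon Q \to G\) is an injective homomorphism, so \(\nu(Q) \le \nu(G)\) by Lemma~\ref{lemma:commensurable}, and combined with the above this yields equality.

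The central case is where the real work lies; here the plan is to lift a free \(n\)-torus from \(Q\) back to \(G\). Given \((F_2)^n = F_{2,1} \times \cdots \times F_{2,n} \le Q\), each preimage \(p^{-1}(F_{2,j})\) is an extension of the free group \(F_{2,j}\) by \(N\), which therefore splits (one lifts a free basis), producing injective lifts \(s_j \colon F_{2,j} \hookrightarrow G\). The obstacle is that \(s_i(F_{2,i})\) and \(s_j(F_{2,j})\) need not commute: for \(i \ne j\) the commutator \([s_i(x), s_j(y)]\) maps to \(1\) in \(Q\), hence lies in \(N\), but may be nontrivial. The key observation that removes this obstruction is that, since \(N\) is central, the pairing \((x,y) \mapsto [s_i(x), s_j(y)]\) is bimultiplicative and so factors through the abelianizations \(F_{2,i}^{\mathrm{ab}} \times F_{2,j}^{\mathrm{ab}}\). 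Consequently, restricting every \(s_j\) to the commutator subgroup \([F_{2,j}, F_{2,j}]\) makes all these pairings vanish, so the restricted images genuinely commute. Since \([F_{2,j}, F_{2,j}]\) is itself a nonabelian free group and hence contains \(F_2\), and since \(p\) carries each \(s_j([F_{2,j}, F_{2,j}])\) isomorphically onto \([F_{2,j}, F_{2,j}]\) with these landing in distinct direct factors of \(Q\), the restricted lifts generate their direct product inside \(G\), which in turn contains \((F_2)^n\). This gives \(\nu(G) \ge n\) and completes the equality. I expect the bimultiplicativity of the central commutator pairing, together with the passage to commutator subgroups, to be the crux of the argument.
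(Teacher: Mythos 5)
Your proposal is correct and follows essentially the same route as the paper: the same trichotomy for normal subgroups of \(F_2\) gives injectivity of \(p\) on each factor (your direct computation that \((F_2)^n \cap N\) is trivial is just a repackaging of the paper's argument that the images \(p(F_{2,j})\) generate a direct product), the split case is handled identically, and in the central case both arguments lift each factor by a section and exploit that centrality of \(N\) turns commutation defects into homomorphisms to an abelian group. The only cosmetic difference is that you restrict the lifts to the commutator subgroups \([F_{2,j},F_{2,j}]\) via bimultiplicativity of the pairing \((x,y)\mapsto[s_i(x),s_j(y)]\), whereas the paper takes a copy of \(F_2\) inside the kernel of \(g \mapsto ([a_\ell,g],[b_\ell,g])_{\ell\neq j}\) --- a kernel which contains your commutator subgroup, so the two constructions produce commuting free subgroups by the same mechanism.
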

\begin{proof}
The inequality $\nu(G)\geq\nu(Q)$ for split extensions is clear because any section of \(p\) lifts any maximal free torus in \(Q\) to \(G\).

If the extension is central, the inequality $\nu(G)\geq\nu(Q)$ is deduced as follows. Let $(F_2)^n\leq Q$ be a free $n$-torus. Choose homomorphic sections $s_\ell\colon F_{2,\ell}\rightarrow G$ of $p$ for each factor $F_{2, \ell}$ of $(F_2)^n$, $1\leq \ell\leq n$.
%By combining the sections $s_j$ we obtain a set theoretic copy of $(F_2)^n$ in $G$. We show that, inside it, we can also find a group theoretic one.
Denote by $a_\ell, b_\ell$ a free basis of $s_\ell(F_{2, \ell})$. For $1\leq \ell\neq j\leq n$, consider the maps
\[
\begin{array}{llll}
\Phi_{\ell,j}\colon & s_j(F_{2, j})&\longrightarrow& i(N)\times i(N)\\
	&g & \longmapsto & ([a_\ell, g],[b_\ell, g]).
\end{array}
\]
Here and throughout the argument, we adopt the convention that $[x, y]=x^{-1}y^{-1}xy$.  By virtue of the commutator relation%s
\[[x, zy]=[x,y]y^{-1}[x, z]y\] %, \mbox{ and }\, [x, y^{-1}]=y[y, x]y^{-1}=y[x, y]^{-1}y^{-1}\]
and the fact that $i(N)$ is central and hence also abelian, the map $\Phi_{\ell, j}$ is a group homomorphism.  Hence we can consider the product homomorphism
\[ \Psi_j = \prod_{\ell \neq j} \Phi_{\ell,j} \colon \ s_j(F_{2,j}) \longrightarrow (i(N))^{2n-2}. \]
The kernel $K_j$ of \(\Psi_j\) is a subgroup of a free group, hence it is either trivial, infinite cyclic, or nonabelian free.  But it cannot be trivial because \((i(N))^{2n-2}\) is abelian.  Neither can it be infinite cyclic because nontrivial finitely generated normal subgroups of $F_2$ have finite index and \(F_2\) is not virtually cyclic.  Consequently $K_j$ is nonabelian free, hence contains a copy of $F_2$ which we want to call $L_j$.  By construction, \(L_1 \cdots L_n\) is a subgroup of \(G\) which is isomorphic to \((F_2)^n\).

%Notice that the argument can be repeated for any $k\in\{1, \ldots, n-1\}$:
%\[
%\begin{array}{llll}
%\Phi_{\ell_1, j}\times\cdots\times \Phi_{\ell_k, j}\colon & s_j(F_{2, j})&%\longrightarrow& i(N)\times i(N)\times\cdots\times i(N)\times i(N)\\
%	&y_j & \longmapsto & ([a_{\ell_1}, y_j],[b_{\ell_1}, y_j], \dots, [%a_{\ell_k}, y_j],[b_{\ell_k}, y_j]),
%\end{array}
%\]
%where $\ell_i\neq j$ for all $i\in\{1, \ldots, k\}$. The kernel of $\Phi_{\ell_1, j}\times\cdots\times \Phi_{\ell_k, j}$ is exactly the intersection $\cap_{i=1}^k  K_{\ell_i, j}$ and is free nonabelian. We choose a copy of $F_2$ in it and call it $J_{\{\ell_1,\ldots, \ell_k\}, j}$.  This yields a free $n$-torus \(
%s_1(F_{2, 1})\times J_{1, 2}\times\cdots\times J_{\{1,\ldots, n-1\}, n}
%\) in $G$.

Now we prove the inequality $\nu(G)\leq\nu(Q)$ for general extensions with \(\nu(N) = 0\).  Let $(F_2)^n\leq G$.  We will show that its image under $p$ is isomorphic to $(F_2)^n$.  For every copy $F_{2, j}\leq( F_2)^n$, $j\in\{1, \ldots, n\}$, consider the normal subgroup $H_j=F_{2, j}\cap i(N) = \ker (p|_{F_{2, j}})$ of \(F_{2,j}\).  Arguing as above, it cannot be infinite cyclic, hence it is trivial or nonabelian free.  But $i(N)$ has no nonabelian free subgroups by assumption, so $H_j$ is trivial. Consequently, the homomorphism $p|_{F_{2, j}}$ is injective.  This shows that the copy $F_{2, j}$ maps isomorphically to $p(F_{2,j})\leq Q$ for every $j\in\{1, \ldots, n\}$.

Obviously, the images $p(F_{2,j})$ of the different $F_2$-factors of $(F_2)^n$ commute in $Q$. What is more, each $p(F_{2,j})$ commutes with the subgroup generated by all other $p(F_{2, k})$ for $k\neq j$. Therefore we have
\[
p(F_{2,j})\cap\left\langle p(F_{2,k}), k\neq j\right\rangle=\{1\}.
\]
Indeed, if an element $q \in p(F_{2,j})$ also lies in $\left\langle p(F_{2,k}), k\neq j\right\rangle$, it must commute with all of $p(F_{2,j})$.  Hence $q$ must be the identity element.  This shows
\[
p((F_2)^n)=\left\langle p(F_{2,j}), j=1, \ldots, n\right\rangle\cong \prod_{j=1}^n p(F_{2,j})\cong (F_2)^n. \qedhere
\]
\end{proof}

\section{Semisimple subalgebras of semisimple Lie algebras}
\label{section:lie-algebras}

In this section and the next, all Lie algebras are understood to be complex Lie algebras.  We reserve the symbol \(\mathfrak{h}\) for a Cartan subalgebra of a semisimple Lie algebra \(\mathfrak{g}\) and we denote by \(\Phi(\mathfrak{g},\mathfrak{h}) \subset \mathfrak{h}^*\) the root system of \((\mathfrak{g},\mathfrak{h})\).  If the Cartan subalgebra is implicit or if its choice does not matter, we will also write \(\Phi(\mathfrak{g})\) or simply \(\Phi\) for short instead of \(\Phi(\mathfrak{g},\mathfrak{h})\).  For \(\alpha \in \Phi\), we denote the root space of \(\alpha\) by  \(\mathfrak{g}_\alpha \subset \mathfrak{g}\) and the coroot of \(\alpha\) (the Killing form dual) by \(h_\alpha \in \mathfrak{h}\).  The following definition, in an equivalent formulation, can be found in~\cite{Dynkin:subalgebras}*{Chapter\,II, \S\,5}.

\begin{definition}
  A subalgebra \(\mathfrak{f} \subset \mathfrak{g}\) of a semisimple Lie algebra is called \emph{regular} if for some root space decomposition \(\mathfrak{g} = \mathfrak{h} \oplus \bigoplus_{\alpha \in \Phi} \mathfrak{g}_\alpha\), we have
  \[ \mathfrak{f} = (\mathfrak{h} \cap \mathfrak{f}) \oplus \bigoplus_{\alpha \in \Phi} (\mathfrak{g}_\alpha \cap \mathfrak{f}). \]
\end{definition}

Hence the conjugacy class of a regular subalgebra is determined by a \emph{closed} subsystem \(\Sigma \subset \Phi\), meaning for \(\alpha, \beta \in \Sigma\) we have \(\alpha + \beta \in \Sigma\) whenever \(\alpha + \beta \in \Phi\), together with a subspace \(\mathfrak{h}_1 \subset \mathfrak{h}\) that contains the coroot \(h_\alpha\) whenever \(\alpha\) and \(-\alpha\) lie in \(\Sigma\).  Such a subalgebra is semisimple if and only if \(\alpha \in \Sigma\) implies \(-\alpha \in \Sigma\) and \(\mathfrak{h}_1\) is the span of the corresponding \(h_\alpha\).  In the latter case, \(\Sigma\) is itself a root sytem, namely the root system of \(\mathfrak{f}\) and \(\Sigma\) embeds as a \emph{closed subroot system} of \(\Phi\).  Even though a semisimple Lie algebra typically contains a multitude of non-regular semisimple subalgebras, we always have the following.

\begin{proposition} \label{prop:full-rank-regular}
  Let \(\mathfrak{f}\) be a semisimple subalgebra of a semisimple Lie algebra \(\mathfrak{g}\) such that \(\rank_\C \mathfrak{f} = \rank_\C \mathfrak{g}\).  Then \(\mathfrak{f}\) is regular.
\end{proposition}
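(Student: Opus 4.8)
The plan is to locate a single Cartan subalgebra that serves simultaneously for \(\mathfrak{f}\) and \(\mathfrak{g}\), and then to read off regularity from the weight space decomposition. First I would fix a Cartan subalgebra \(\mathfrak{h}\) of the semisimple algebra \(\mathfrak{f}\), so that \(\dim_\C \mathfrak{h} = \rank_\C \mathfrak{f} = \rank_\C \mathfrak{g}\) by hypothesis. The crucial preliminary step is to show that \(\mathfrak{h}\) is not merely toral in \(\mathfrak{f}\) but toral in \(\mathfrak{g}\), i.e.\ that every \(x \in \mathfrak{h}\) acts semisimply under \(\mathrm{ad}_\mathfrak{g}\). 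This is where the semisimplicity of \(\mathfrak{f}\) enters decisively: since the composite \(\mathfrak{f} \hookrightarrow \mathfrak{g} \xrightarrow{\ \mathrm{ad}\ } \mathfrak{gl}(\mathfrak{g})\) is a finite-dimensional representation of the semisimple algebra \(\mathfrak{f}\), the abstract Jordan decomposition of \(x\) computed in \(\mathfrak{f}\) is carried to the Jordan decomposition of \(\mathrm{ad}_\mathfrak{g}(x)\) in \(\mathfrak{gl}(\mathfrak{g})\). As elements of a Cartan subalgebra of \(\mathfrak{f}\) are semisimple, their nilpotent part in \(\mathfrak{f}\) vanishes, whence \(\mathrm{ad}_\mathfrak{g}(x)\) is semisimple.

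Consequently \(\mathfrak{h}\) is a commutative subalgebra of \(\mathfrak{g}\) consisting of \(\mathrm{ad}_\mathfrak{g}\)-semisimple elements, i.e.\ a toral subalgebra of \(\mathfrak{g}\). Every toral subalgebra is contained in a maximal one, and maximal toral subalgebras of the semisimple algebra \(\mathfrak{g}\) are precisely its Cartan subalgebras, all of dimension \(\rank_\C \mathfrak{g}\). Since \(\dim_\C \mathfrak{h} = \rank_\C \mathfrak{g}\) already, \(\mathfrak{h}\) admits no proper enlargement and is therefore itself a Cartan subalgebra of \(\mathfrak{g}\). In this way I obtain one subalgebra \(\mathfrak{h}\) that is simultaneously a Cartan subalgebra of \(\mathfrak{f}\) and of \(\mathfrak{g}\).

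For the final step I would decompose \(\mathfrak{g}\) into \(\mathrm{ad}(\mathfrak{h})\)-weight spaces. Because \(\mathfrak{h}\) is a Cartan subalgebra of \(\mathfrak{g}\), its zero weight space is \(\mathfrak{h}\) itself and the nonzero weights are exactly the roots \(\alpha \in \Phi(\mathfrak{g},\mathfrak{h})\) with weight spaces \(\mathfrak{g}_\alpha\). The subalgebra \(\mathfrak{f}\) is \(\mathrm{ad}(\mathfrak{h})\)-invariant, so it inherits the decomposition \(\mathfrak{f} = (\mathfrak{f}\cap\mathfrak{h}) \oplus \bigoplus_{\alpha\in\Phi(\mathfrak{g})}(\mathfrak{f}\cap\mathfrak{g}_\alpha)\); as \(\mathfrak{h}\subseteq\mathfrak{f}\) we have \(\mathfrak{f}\cap\mathfrak{h}=\mathfrak{h}\), and this is exactly the defining condition for \(\mathfrak{f}\) to be regular. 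I expect the only genuine obstacle to be the Jordan decomposition compatibility in the first paragraph; once semisimple elements of \(\mathfrak{f}\) are known to remain semisimple in \(\mathfrak{g}\), the dimension count and the weight decomposition are routine. This is also precisely the point where full rank is used, as \(\mathfrak{so}(n)\subset\mathfrak{sl}_n\) shows that lower-rank semisimple subalgebras may well fail to be regular.
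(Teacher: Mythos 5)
Your proposal is correct and takes essentially the same approach as the paper: both arguments use preservation of Jordan decomposition under representations of semisimple Lie algebras to show that a Cartan subalgebra \(\mathfrak{h}\) of \(\mathfrak{f}\) consists of \(\operatorname{ad}_\mathfrak{g}\)-semisimple elements, invoke the full-rank hypothesis to conclude \(\mathfrak{h}\) is a Cartan subalgebra of \(\mathfrak{g}\), and then decompose \(\mathfrak{f}\) compatibly with the resulting root space decomposition. The only cosmetic difference is the finish: the paper diagonalizes \(\operatorname{ad}(h)|_\mathfrak{f}\) for a single regular element \(h \in \mathfrak{h}\) via a minimal-polynomial divisibility argument, whereas you simultaneously diagonalize the commuting family \(\operatorname{ad}(\mathfrak{h})|_\mathfrak{f}\) — both are routine and valid.
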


For the convenience of the reader, we give an alternative argument to the original one in~\cite{Dynkin:subalgebras}*{Chapter\,II, \S\,6}.

\begin{proof}
By preservation of Jordan decomposition~\cite{Humphreys:Lie-algebras}*{Section~6.4, Exercise~9, p.\,31}, a Cartan subalgebra \(\mathfrak{h}\) of \(\mathfrak{f}\) is also a Cartan subalgebra of \(\mathfrak{g}\).  Hence there exists an element \(h \in \mathfrak{h}\) which is \emph{regular} in \(\mathfrak{g}\).  This means the endomorphism \(\operatorname{ad}(h)\) of \(\mathfrak{g}\) is diagonalizable with maximal possible spectrum so that the eigenspace decomposition gives a root space decomposition of \(\mathfrak{g}\).  Being diagonalizable means precisely that the minimal polynomial \(m\) of \(\operatorname{ad}(h)\) splits into simple roots.  Since \(m\) clearly also annihilates the restriction \(\operatorname{ad}(h)|_\mathfrak{f}\) to the invariant subspace \(\mathfrak{f} \subset \mathfrak{g}\), the minimal polynomial of \(\operatorname{ad}(h)|_\mathfrak{f}\) divides \(m\).  Therefore \(\operatorname{ad}(h)|_\mathfrak{f}\) is diagonalizable, too, and the eigenspaces are subspaces of the eigenspaces of \(\operatorname{ad}(h)\).  Whence every \(x \in \mathfrak{f}\) decomposes uniquely as a sum of \(\operatorname{ad}(h)\)-eigenvectors from \(\mathfrak{f}\).
\end{proof}

Adopting a terminology coined by E.\,B.\,Dynkin~\cite{Dynkin:subalgebras}*{\S 7}, we say more generally that a subalgebra of a semisimple Lie algebra \(\mathfrak{g}\) is an \emph{\(R\)-subalgebra} if it is contained in a \emph{proper} regular subalgebra of \(\mathfrak{g}\).  Any subalgebra of \(\mathfrak{g}\) which is not an \(R\)-subalgebra will be called an \emph{\(S\)-subalgebra}.  The following \emph{Kronecker product} algebras constitute a typical way in which \(S\)-subalgebras of semisimple Lie algebras can arise.

\begin{definition}[See \cite{Dynkin:subalgebras}, p. 238]\label{def:sums-of-Kronecker-products}
Let $\mathfrak{g}$ and $\mathfrak{k}$ be two classical matrix Lie algebras of size $s$ and \(t\), respectively.  We define the Lie algebra $\mathfrak{g}\times\mathfrak{k}$ to be the matrix Lie algebra of size \(st\) generated by all elements of the form
\[
g\otimes I_t+I_s\otimes k
\]
where $g\in \mathfrak{g}, k\in\mathfrak{k}$ and $I_\ell$ is the identity matrix of size $\ell$. Here $\otimes$ denotes the \emph{Kronecker product} of two matrices.
\end{definition}

While the factors \(\mathfrak{g}\) and \(\mathfrak{k}\) are somewhat strangely embedded into the matrix algebra \(\mathfrak{g} \times \mathfrak{k}\), the Kronecker product as abstract Lie algebra is still just a direct sum, as we verify next.

\begin{lemma}\label{lemma:Kronecker-algebras}
The algebra $\mathfrak{g}\times\mathfrak{k}$ is isomorphic to the direct sum $\mathfrak{g}\oplus\mathfrak{k}$.
\end{lemma}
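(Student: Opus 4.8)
The plan is to write down the obvious candidate isomorphism and verify it works. Define the linear map
\[
\phi \colon \mathfrak{g}\oplus\mathfrak{k} \longrightarrow \mathfrak{gl}_{st}(\C), \qquad (g,k) \longmapsto g\otimes I_t + I_s\otimes k,
\]
whose set of values is precisely the generating set of \(\mathfrak{g}\times\mathfrak{k}\) from Definition~\ref{def:sums-of-Kronecker-products}. I expect the whole lemma to reduce to checking that \(\phi\) is a Lie algebra homomorphism and that it is injective.

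First I would verify that \(\phi\) respects brackets. Using multiplicativity of the Kronecker product, \((A\otimes B)(C\otimes D)=(AC)\otimes(BD)\), one computes \([g_1\otimes I_t, g_2\otimes I_t]=[g_1,g_2]\otimes I_t\) and \([I_s\otimes k_1, I_s\otimes k_2]=I_s\otimes[k_1,k_2]\), while the cross terms vanish since \([g\otimes I_t, I_s\otimes k]=g\otimes k-g\otimes k=0\). Hence
\[
[\phi(g_1,k_1),\phi(g_2,k_2)]=[g_1,g_2]\otimes I_t+I_s\otimes[k_1,k_2]=\phi\bigl([g_1,g_2],[k_1,k_2]\bigr).
\]
In particular \(\operatorname{im}\phi\) is a linear subspace that is closed under the bracket, hence already a Lie subalgebra; therefore the subalgebra \emph{generated} by the elements \(g\otimes I_t+I_s\otimes k\) coincides with \(\operatorname{im}\phi\), and \(\phi\) is a surjection onto \(\mathfrak{g}\times\mathfrak{k}\).

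It remains to prove injectivity, which I expect to be the only step where the nature of the factors matters. Suppose \(g\otimes I_t+I_s\otimes k=0\) and read this \(st\times st\) matrix in \(s\times s\) blocks of size \(t\): its \((i,j)\) block is \(g_{ij}I_t+\delta_{ij}k\). The off-diagonal blocks give \(g_{ij}=0\) for \(i\neq j\), and the diagonal blocks give \(g_{ii}I_t=-k\) for every \(i\), forcing all \(g_{ii}\) to equal a single scalar \(c\). Thus \(g=cI_s\) and \(k=-cI_t\) are scalar matrices. But a classical matrix Lie algebra lies inside \(\mathfrak{sl}\), and the only trace-zero scalar matrix is \(0\); hence \(c=0\) and \((g,k)=(0,0)\). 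So \(\ker\phi=0\) and \(\phi\) is the desired isomorphism.

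The computations are routine; the one point deserving care is the injectivity argument, where one must invoke that neither \(\mathfrak{g}\) nor \(\mathfrak{k}\) contains a nonzero scalar matrix. This is exactly the feature distinguishing the semisimple classical algebras \(\mathfrak{sl},\mathfrak{so},\mathfrak{sp}\) relevant here from \(\mathfrak{gl}_n\), for which the analogous statement would genuinely fail (the pairs \((cI_s,-cI_t)\) would lie in the kernel).
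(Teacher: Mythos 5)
Your proof is correct and takes essentially the same approach as the paper: both rest on the same two computations, namely the vanishing of the cross terms \([g\otimes I_t, I_s\otimes k]=0\) in the bracket, and the block-matrix argument showing that an element of \(\ker\phi\) (equivalently, of \(i_\mathfrak{g}(\mathfrak{g})\cap i_\mathfrak{k}(\mathfrak{k})\) in the paper's phrasing) forces \(g\) and \(k\) to be scalar, hence trace-zero, hence zero. The only cosmetic difference is that you package everything into a single homomorphism \(\phi\) on the direct sum, while the paper works with the two inclusions and checks spanning, trivial intersection, and bracket compatibility separately.
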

\begin{proof}
The maps
\[
\begin{array}{lllllllll}
i_\mathfrak{g}\colon& \mathfrak{g} & \longrightarrow & \mathfrak{g}\times\mathfrak{k} & \mbox{and} &i_\mathfrak{k}\colon & \mathfrak{k}& \longrightarrow & \mathfrak{g}\times\mathfrak{k}\\
&g& \longmapsto &g\otimes I_t& && k&\longmapsto & I_s\otimes k 
\end{array}
\]
are injective linear maps and the union of their images spans $\mathfrak{g}\times\mathfrak{k}$ by definition.

The intersection $i_\mathfrak{g}(\mathfrak{g})\cap i_\mathfrak{k}(\mathfrak{k})$ is trivial: indeed, let   $A=g\otimes I_t=I_s\otimes k$ be one of its elements. As element of $i_\mathfrak{k}(\mathfrak{k})$, $A$ is a block diagonal matrix, the blocks being given by $k$. To be an element of $i_\mathfrak{g}(\mathfrak{g})$ implies then that $g$ is diagonal. But then $k$ is diagonal too. Moreover, the block structure implies that all the diagonal entries of $g$ must be the same. The same holds then for $k$. Now recall that the classical Lie algebras all have trace zero. Therefore the scalar matrices $g$ and $k$ are trivial.

It remains to check that the Lie bracket of $\mathfrak{g}\times\mathfrak{k}$ is compatible with the direct sum decomposition $i_\mathfrak{g}(\mathfrak{g})\oplus i_\mathfrak{k}(\mathfrak{k})$. Let $g\otimes I_t+I_s\otimes k, g'\otimes I_t+I_s\otimes k'\in\mathfrak{g}\times\mathfrak{k}$. Using the properties of the Kronecker product, we compute
\begin{align*}
[g\otimes I_t+I_s\otimes k, g'\otimes I_t+I_s\otimes k'] &= gg'\otimes I_t+g\otimes k'+g'\otimes k+I_s\otimes kk'\nonumber\\
	 & \quad - g'g\otimes I_t-g'\otimes k-g\otimes k'-I_s\otimes k'k\nonumber\\
	&= gg'\otimes I_t-g'g\otimes I_t+I_s\otimes kk'-I_s\otimes k'k\nonumber\\
	&= [g\otimes I_t, g'\otimes I_t]+[I_s\otimes k, I_s\otimes k']\nonumber\\
	& = [i_\mathfrak{g}(g), i_\mathfrak{g}(g')]+[i_\mathfrak{k}(k), i_\mathfrak{k}(k')].\nonumber \qedhere
\end{align*}
\end{proof}

\section{Maximal products of minimal subalgebras}
\label{section:maximal}

In this section, we exploit Dynkin's classical work on the classification of semisimple subalgebras of semisimple Lie algebras to give a proof of the following proposition.  We still assume that all Lie algebras are complex.

\begin{proposition}\label{prop:regular-versus-not}
Let \(\mathfrak{g}\) be a semisimple Lie algebra.  Then the maximal number \(n\) such that \((\mathfrak{sl}_2)^n\) embeds as a regular subalgebra of \(\mathfrak{g}\) is also maximal among all embeddings of \((\mathfrak{sl}_2)^n\) as subalgebra.
\end{proposition}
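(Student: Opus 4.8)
Denote by $\sigma(\mathfrak g)$ the maximal $n$ admitting \emph{any} embedding $(\mathfrak{sl}_2)^n\hookrightarrow\mathfrak g$ and by $\rho(\mathfrak g)$ the maximal $n$ admitting a \emph{regular} one, so that the assertion reads $\sigma(\mathfrak g)=\rho(\mathfrak g)$; as regular embeddings are embeddings, only $\sigma\le\rho$ needs proof. A regular $(\mathfrak{sl}_2)^n$ is precisely a closed subsystem of type $(A_1)^n$, that is, $n$ pairwise strongly orthogonal roots, so $\rho(\mathfrak g)=\operatorname{sork}\Phi(\mathfrak g)$, with values given by Proposition~\ref{prop:sork-of-irred}. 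The plan is to prove $\sigma(\mathfrak g)\le\rho(\mathfrak g)$ for all semisimple $\mathfrak g$ by strong induction on $\dim_\C\mathfrak g$, the base case being $\mathfrak g=\mathfrak{sl}_2$. If $\mathfrak g=\bigoplus_j\mathfrak g_j$ is not simple, I first reduce to the simple factors: for an embedded $(\mathfrak{sl}_2)^n$ with simple factors $\mathfrak s_1,\dots,\mathfrak s_n$ and projections $\pi_j$ onto $\mathfrak g_j$, each image $\pi_j((\mathfrak{sl}_2)^n)\cong(\mathfrak{sl}_2)^{m_j}\subseteq\mathfrak g_j$ with $m_j=\#\{k:\pi_j(\mathfrak s_k)\ne0\}$; since every nonzero $\mathfrak s_k$ survives some $\pi_j$, we get $n\le\sum_j m_j\le\sum_j\sigma(\mathfrak g_j)$, and the induction hypothesis together with additivity of $\operatorname{sork}$ over the mutually orthogonal $\Phi(\mathfrak g_j)$ gives $\sum_j\sigma(\mathfrak g_j)=\sum_j\rho(\mathfrak g_j)=\rho(\mathfrak g)$.

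Thus assume $\mathfrak g$ simple and take $(\mathfrak{sl}_2)^n\subseteq\mathfrak g$ with $n=\sigma(\mathfrak g)$. If this subalgebra is all of $\mathfrak g$ then $\mathfrak g=\mathfrak{sl}_2$ and we are done; otherwise it lies in a proper maximal subalgebra $\mathfrak m$, which by Dynkin's classification~\citelist{\cite{Dynkin:subalgebras}\cite{Dynkin:maximal-subgroups}} is either regular (a maximal parabolic or a maximal-rank semisimple subalgebra) or a semisimple maximal $S$-subalgebra. If $\mathfrak m$ is regular, then, being semisimple, $(\mathfrak{sl}_2)^n$ lies by Levi--Malcev in an inner conjugate of a maximal semisimple subalgebra $\mathfrak f$ of $\mathfrak m$, which we may take to be the regular semisimple subalgebra attached to the symmetric part of $\mathfrak m$'s root system; this $\mathfrak f$ is a proper regular semisimple subalgebra of $\mathfrak g$ corresponding to a closed subsystem $\Sigma\subset\Phi$. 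As closedness is transitive, a closed $(A_1)^k$-subsystem of $\Sigma$ is closed in $\Phi$, so $\rho(\mathfrak f)\le\rho(\mathfrak g)$; the induction hypothesis gives $n\le\sigma(\mathfrak f)=\rho(\mathfrak f)\le\rho(\mathfrak g)$.

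The remaining case, $\mathfrak m$ a maximal $S$-subalgebra, is the crux: here $\mathfrak m$ is semisimple of strictly smaller dimension but is \emph{not} regular, so the preceding shortcut is unavailable and the induction hypothesis only gives $n\le\sigma(\mathfrak m)=\rho(\mathfrak m)=\operatorname{sork}\Phi(\mathfrak m)$. Everything therefore reduces to the combinatorial inequality
\[ \operatorname{sork}\Phi(\mathfrak m)\ \le\ \operatorname{sork}\Phi(\mathfrak g) \]
for every maximal $S$-subalgebra $\mathfrak m$ of a simple $\mathfrak g$. One cheap observation trims the work enormously: an $S$-subalgebra cannot have full rank, since a full-rank semisimple subalgebra is regular by Proposition~\ref{prop:full-rank-regular}; hence $\operatorname{rank}\mathfrak m\le\operatorname{rank}\mathfrak g-1$, and because $\operatorname{sork}\Phi(\mathfrak m)\le\operatorname{rank}\mathfrak m$ always holds, the inequality is automatic whenever $\operatorname{sork}\Phi(\mathfrak g)\ge\operatorname{rank}\mathfrak g-1$. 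Consulting the table of Proposition~\ref{prop:sork-of-irred}, this disposes of every type except $A_n$ with $n\ge4$ and $E_6$.

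These residual cases --- $\mathfrak g=\mathfrak{sl}_{n+1}$ with $n\ge4$, and $E_6$ --- are where the genuine work lies, and I would settle them by going through Dynkin's explicit lists of maximal $S$-subalgebras. For $\mathfrak{sl}_N$ these are the irreducibly embedded subalgebras: $\mathfrak{so}_N$ and $\mathfrak{sp}_N$, the tensor-product subalgebras $\mathfrak{sl}_p\times\mathfrak{sl}_q$ with $pq=N$ (of abstract type $A_{p-1}\oplus A_{q-1}$ by Lemma~\ref{lemma:Kronecker-algebras}), and simple subalgebras acting through an $N$-dimensional irreducible representation; using Proposition~\ref{prop:sork-of-irred} and additivity one checks $\operatorname{sork}\Phi(\mathfrak m)\le\operatorname{sork}(A_{N-1})=\lfloor N/2\rfloor$ in each case, the tight comparisons being $\mathfrak{sp}_N$ (type $C_{N/2}$, giving equality) and $\mathfrak{so}_N$. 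For $E_6$ one reads off from the short list of maximal $S$-subalgebras --- $G_2$, $C_4$, $F_4$, $A_2$ and $A_2\times G_2$ --- that $\operatorname{sork}\Phi(\mathfrak m)\le4=\operatorname{sork}\Phi(\mathfrak g)$, with $C_4$ and $F_4$ extremal. The main obstacle is exactly this case analysis: it depends on the completeness of Dynkin's tables and on the Kronecker-product identification of Lemma~\ref{lemma:Kronecker-algebras} to read off each $S$-subalgebra's root system, after which the $\operatorname{sork}$ bookkeeping is routine but must be carried out without omission.
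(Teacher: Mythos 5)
Your proposal is correct, and its skeleton coincides with the paper's: induction on \(\dim_\C\mathfrak g\), reduction to simple \(\mathfrak g\), the regular/\(R\)-case handled via Levi--Malcev conjugation into a proper regular semisimple subalgebra plus transitivity of closedness (the paper outsources this step to Dynkin's Theorem~7.7), and the \(S\)-case handled by pushing \((\mathfrak{sl}_2)^n\) into a maximal \(S\)-subalgebra \(\mathfrak m\) and applying the induction hypothesis there. Where you genuinely diverge is in how the resulting inequality \(\operatorname{sork}\Phi(\mathfrak m)\le\operatorname{sork}\Phi(\mathfrak g)\) is verified. The paper does this exhaustively: Table~\ref{table:exceptional} for all exceptional types, and Tables~\ref{table:category3} and~\ref{table:category2} for Dynkin's categories~III and~II over \emph{all} classical types \(A\), \(B\), \(C\), \(D\). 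Your observation that a maximal \(S\)-subalgebra is semisimple of non-full rank --- full rank would force regularity by Proposition~\ref{prop:full-rank-regular}, contradicting the \(S\)-property --- combined with the trivial bound \(\operatorname{sork}\Phi(\mathfrak m)\le\operatorname{rank}\mathfrak m\le\operatorname{rank}\mathfrak g-1\) and the table in Proposition~\ref{prop:sork-of-irred}, kills every type with \(\operatorname{sork}\Phi(\mathfrak g)\ge\operatorname{rank}\mathfrak g-1\), i.e.\ everything except \(A_n\) \((n\ge 4)\) and \(E_6\). That is a real economy: types \(B\), \(C\), \(D\) and four of the five exceptional algebras need no classification data at all, which removes most of the error-prone bookkeeping that constitutes the paper's Section~\ref{section:maximal}. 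What it does not buy is independence from Dynkin: your residual cases still require the completeness of his lists of maximal \(S\)-subalgebras of \(\mathfrak{sl}_N\) (irreducible simple subalgebras, \(\mathfrak{so}_N\), \(\mathfrak{sp}_N\), and the Kronecker products \(\mathfrak{sl}_p\times\mathfrak{sl}_q\)) and of \(E_6\), and the category~II check for \(\mathfrak{sl}_N\) silently uses the minimal faithful representation dimensions (the paper's Table~\ref{table:category2}) to get \(\operatorname{sork}\Phi(\mathfrak m)\le\lfloor N/2\rfloor\). Two assertions you make without proof are standard but should be attributed: that maximal \(S\)-subalgebras are semisimple (non-semisimple maximal subalgebras are parabolic or lie in the centralizer of their center, hence are regular), and the Levi--Malcev conjugation statement; the paper covers both by citing Dynkin. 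These are gaps in citation, not in mathematics, and are at the same level of detail as the published argument.
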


We give the proof by induction on \(\dim \mathfrak{g}\).  The beginning is trivial because \(\dim \mathfrak{g} = 3\) implies \(\mathfrak{g} \cong \mathfrak{sl}_2\).  So consider a subalgebra \((\mathfrak{sl}_2)^n \subset \mathfrak{g}\) with maximal~\(n\).

\subsection{Reduction to embedding  \((\mathfrak{sl}_2)^n\) as $S$-subalgebra in a simple \(\mathfrak{g}\) }

If \((\mathfrak{sl}_2)^n \subset \mathfrak{g}\) is an \(R\)-subalgebra, then \cite{Dynkin:subalgebras}*{Theorem~7.7} implies that \((\mathfrak{sl}_2)^n\) is contained in a proper regular semisimple subalgebra \(\mathfrak{f} \subset \mathfrak{g}\).  By induction assumption, there exists a (possibly different) regular subalgebra \((\mathfrak{sl}_2)^n \subset \mathfrak{f}\) which is also regular in \(\mathfrak{g}\) because being regular is apparently transitive.  Hence it remains to consider the case that \((\mathfrak{sl}_2)^n\) is an \(S\)-subalgebra of~\(\mathfrak{g}\).  Assume \(\mathfrak{g} = \mathfrak{g}_1 \oplus \mathfrak{g}_2\) for two semisimple ideals \(\mathfrak{g}_1\) and \(\mathfrak{g}_2\).  Then \((\mathfrak{sl}_2)^n\) projects to subalgebras \((\mathfrak{sl}_2)^{n_1}\) of \(\mathfrak{g}_1\) and \((\mathfrak{sl}_2)^{n_2}\) of \(\mathfrak{g}_2\) with \(n_1 + n_2 = n\).  By induction assumption, we have regular subalgebras \((\mathfrak{sl}_2)^{n_1}\) in \(\mathfrak{g}_1\) and \((\mathfrak{sl}_2)^{n_2}\) in \(\mathfrak{g}_2\) and their direct sum gives a regular subalgebra \((\mathfrak{sl}_2)^n\) in \(\mathfrak{g}\). This reduces the proof to the case that \(\mathfrak{g}\) is simple.

\subsection{Maximal regular embeddings of \((\mathfrak{sl}_2)^n\) in maximal $S$-subalgebras}

Suppose that \((\mathfrak{sl}_2)^n \subset \mathfrak{g}\) is embedded as an \(S\)-subalgebra and \(\mathfrak{g}\) is simple.  Since \(\dim \mathfrak{g} > 3\), we know that \((\mathfrak{sl}_2)^n \subset \mathfrak{g}\) is a proper subalgebra, hence lies in a maximal proper \(S\)-subalgebra \(\mathfrak{g}_1\) of the simple algebra \(\mathfrak{g}\).  By induction assumption, \((\mathfrak{sl}_2)^n\) also embeds regularly in \(\mathfrak{g}_1\) and it remains to see that this implies that \(n\) is less than or equal to the maximal \(m\) such that \((\mathfrak{sl}_2)^m\) embeds regularly into \(\mathfrak{g}\).

\subsubsection{Maximal \(S\)-subalgebras \(\mathfrak{g}_1\) of exceptional simple Lie algebras \(\mathfrak{g}\)}

From \cite{Dynkin:subalgebras}*{Theorem~14.1 and Tables~9 and~11}, we extract to Table~\ref{table:exceptional} which types of maximal proper \(S\)-subalgebras occur among the exceptional simple Lie algebras yielding the indicated maximal values of \(m\) and \(n\).
  \begin{table}
  \begin{center}
    \renewcommand{\arraystretch}{1.5}
  \begin{tabular}{c l c c}
    \hline
    \textbf{Type} & \textbf{Types of maximal proper \textit{S}-subalgebras} & \textbf{\textit{m}} & \textbf{\textit{n}}\\
    \hline
    \(G_2\) \ & \(A_1\) & 2 & 1 \\
    \(F_4\) \ & \(A_1\), \ \ \(G_2 \times A_1\) & 4 & 3\\
    \(E_6\) \ & \(A_1\), \ \ \(G_2\), \ \ \(C_4\), \ \ \(G_2 \times A_2\), \ \ \(F_4\) & 4 & 4\\
    \(E_7\) \ & \(A_1\), \ \ \(A_2\), \ \ \(G_2 \times C_3\), \ \ \(F_4 \times A_1\), \ \ \(G_2 \times A_1\), \ \ \(A_1 \times A_1\) \ \ & 7 & 5\\
    \(E_8\) \ & \(A_1\), \ \ \(G_2 \times F_4\), \ \ \(A_2 \times A_1\), \ \ \(B_2\) & 8 & 6\\
    \hline
    \end{tabular}
  \end{center}
  \caption{Maximal proper \(S\)-subalgebras of exceptional algebras.}
  \label{table:exceptional}
\end{table}
We read off immediately that for exceptional \(\mathfrak{g}\), we always have \(m \ge n\), as required.

\subsubsection{Maximal \(S\)-subalgebras \(\mathfrak{g}_1\) of classical simple Lie algebras \(\mathfrak{g}\)}

It remains to consider the case that \((\mathfrak{sl}_2)^n\) lies regularly in a maximal proper \(S\)-subalgebra \(\mathfrak{g}_1\) of a classical simple algebra~\(\mathfrak{g}\).  To settle this case, we can make usage of Dynkin's earlier work~\cite{Dynkin:maximal-subgroups} in which he classifies \emph{all} maximal subalgebras of the classical algebras, because the maximal \(S\)-subalgebra \(\mathfrak{g}_1\) is by definition of an \(S\)-subalgebra also maximal among all subalgebras.

The classification of maximal subalgebras of classical simple Lie algebras splits into three categories~\cite{Dynkin:subalgebras}*{No.\,46, p.\,237}:
  \begin{enumerate}[(I)]
  \item regular,
  \item non-regular simple,
  \item non-regular non-simple.
  \end{enumerate}
Since \(\mathfrak{g}_1\) is an \(S\)-subalgebra, it must either belong to category~II or category~III. 

The types of the {\bfseries category III} maximal subalgebras of the classical simple algebras can be inferred from \cite{Dynkin:subalgebras}*{Table on p.\,238} so that we compute the values for the maximal \(m\) and \(n\) given in Table~\ref{table:category3} by means of \cite{Dynkin:subalgebras}*{Table~9}.
 \begin{table} 
    \begin{center}
    \renewcommand{\arraystretch}{2}
  \begin{tabular}{llll}
    \hline
    \textbf{Type} & \textbf{Types of category III subalgebras} & \textbf{\textit{m}} & \textbf{\textit{n}}\\
    \hline
    \(A_r\) \ & \small \(A_{s-1} \times A_{t-1} \ \text{ for } \ 2 \le s \le t, \ \ st = r+1\) & \(\lceil \frac{r}{2} \rceil\) & \(\lfloor \frac{s}{2} \rfloor + \lfloor \frac{t}{2} \rfloor\) \\
    \(B_r\) & \small \(B_s \times B_t \ \text{ for } 1 \le s \le t, \ (2s+1)(2t+1) = 2r+1\) & \(r\) & \(s + t\) \vspace{1mm}\\
    \(C_r\) & \small \makecell[l]{\(C_s \times B_t \ \text{ for } \ s \ge 1, \ t \ge 1, \ \ s(2t+1) = r\), \\ \(C_s \times D_t \ \text{ for } \ s \ge 1, \ t \ge 3, \ \ s(2t) = r\) \\ \(C_1 \times D_2, \ r = 4\)} & \makecell[l]{\(r\) \\ \(r\) \\ \(4\)} & \makecell[l]{\(s+t\) \\ \(\le s+t\) \\ 3} \vspace{2mm} \\

    \(D_r\) &\small \makecell[l]{\(C_s \times C_t \ \, \text{ for } \ 1 \le s \le t, \ \ 2st=r\), \\  \(B_s \times D_t \ \text{ for } \ 1 \le s < t, \ (2s+1)t=r, \ \ t \neq 2\), \\ \(D_s \times B_t \ \text{ for } \ 2 < s < t + 1, \ \ s(2t+1) = r\), \\ \(D_s \times D_t \ \text{ for } \ 2 < s \le t, \ \ 2st=r\)} &  \small \makecell[l]{\(r\) if \(r\) is  \\ even, \\ \(r-1\) if \(r\) \\ is odd } & \(\le s+t\)\\
    \hline
    \end{tabular}
  \end{center}
  \caption{Category~III subalgebras of classical simple Lie algebras.}
  \label{table:category3}
\end{table}
 It turns out that as matrix algebras, the category~III subalgebras lie inside \(\mathfrak{g}\) as sums of \emph{Kronecker products}, see Definition \ref{def:sums-of-Kronecker-products} and Lemma \ref{lemma:Kronecker-algebras}.  This also explains the conditions of the type ``\(st = r+1\)'' resulting from the embedding \(\mathfrak{sl}_s \times \mathfrak{sl}_t \subset \mathfrak{sl}_{st}\).  The term ``\(\le s + t\)'' in the \(n\)-column means more precisely ``\(s + t -1\)'' if one of type \(D_t\) with \(t\) odd or \(D_s\) with \(s\) odd occur as factors in the maximal subalgebra, ``\(s + t -2\)'' if both \(D_t\) with \(t\) odd and \(D_s\) with \(s\) odd occur as factors in the maximal subalgebra, and ``\(s+t\)'' in all other cases.  As a side remark, we observe that if \(2r+1\) is prime, then a type \(B_r\) algebra has no maximal subalgebra which is neither regular nor simple.  Of course, the usual low rank isomorphisms can and should be applied; for example \(C_4\) has a maximal subalgebra which is non-regular and of type \(C_1 \times D_2 \cong A_1 \times A_1 \times A_1\).  The table shows again that \(m \ge n\) (and equality only occurs if \(\mathfrak{g}\) has type~\(A_3\) or \(D_2\)).

So the final case to consider occurs if \(\mathfrak{g}_1\) belongs to  {\bfseries category~II}, meaning \(\mathfrak{g}_1\) is a non-regular simple maximal subalgebra of either \(\mathfrak{sl}_k\), \(\mathfrak{sp}_k\), or \(\mathfrak{so}_k\).

This case typically arises as follows.  Take a \(k\)-dimensional irreducible representation \(\phi \colon \mathfrak{g}_1 \rightarrow \mathfrak{gl}_k\) of the simple Lie algebra \(\mathfrak{g}_1\).  Since every simple Lie algebra is perfect, we have
  \[ \phi(\mathfrak{g}_1) = \phi([\mathfrak{g}_1,\mathfrak{g}_1]) = [\phi(\mathfrak{g}_1), \phi(\mathfrak{g}_1)] \subset [\mathfrak{gl}_k, \mathfrak{gl}_k] = \mathfrak{sl}_k, \]
  so \(\phi\) defines an embedding \(\mathfrak{g}_1 \subset \mathfrak{sl}_k\) assuming \(k > 1\).  If the corresponding linear group \(G_0 \subset \operatorname{SL}_k(\C)\) preserves a non-degenerate skew-symmetric bilinear form, we obtain in fact \(\mathfrak{g}_1 \subset \mathfrak{sp}_k\) and \(k\) must be even.  If \(G_0\) preserves a non-degenerate symmetric bilinear form, we obtain \(\mathfrak{g}_1 \subset \mathfrak{so}_k\).  Whether one of the last two (mutually exclusive) conditions is satisfied, can be decided solely in terms of the integers \(\frac{2(\lambda, \alpha)}{(\alpha, \alpha)}\) for simple roots \(\alpha\) of \(\mathfrak{g}_1\) where \(\lambda\) is the highest weight of \(\phi\)~\cite{Dynkin:maximal-subgroups}*{Paragraph~C, p.\,254}.

 It turns out that these subalgebras \(\mathfrak{g}_1 \subset \mathfrak{sl}_k\), \(\mathfrak{g}_1 \subset \mathfrak{sp}_k\), and \(\mathfrak{g}_1 \subset \mathfrak{so}_k\) are almost always maximal.  In fact, the irreducible representations \(\phi \colon \mathfrak{g}_1 \rightarrow \mathfrak{gl}_k\) which form an exception to this rule either belong to one of four infinite series or one of 14 additional cases which are all listed in \cite{Dynkin:maximal-subgroups}*{Table~1, p.\,364}.  For our purposes it is however more important that according to \cite{Dynkin:subalgebras}*{p.\,238}, with the exception of \(\mathfrak{so}_{2r-1} \subset \mathfrak{so}_{2r}\), every category II maximal subalgebra \(\mathfrak{g}_1\) of a classical simple Lie algebra \(\mathfrak{g}\) arises in the above fashion.

For \(\mathfrak{so}_{2r-1} \subset \mathfrak{so}_{2r}\), we have \(n = r-1\) and \(m=r\) for even \(r\) and \(m=r-1\) for odd \(r\), so in any case \(m \ge n\).  Thus it is now enough to see that for proper subalgebras \(\mathfrak{g}_1 \subset \mathfrak{sl}_k\), \(\mathfrak{g}_1 \subset \mathfrak{sp}_k\), and \(\mathfrak{g}_1 \subset \mathfrak{so}_k\) constructed as above, we still have \(m \ge n\).

It is clearly enough to verify this condition for the minimal possible \(k\) in each case.  The highest weight of a minimal faithful representation of \(\mathfrak{g}_1\) must occur among the fundamental weights so that we can look up these dimensions in the relevant literature, for instance~\cite{Carter:Lie-Algebras}*{Propositions~13.2, 13.8, 13.10, 13.23, and Section~13.8}, and gather them in Table~\ref{table:category2}.
  \begin{table}
  \begin{center}
  \begin{tabular}{lccccccccc}
    \hline
    \textbf{Type of} \(\mathfrak{g}_1\) & \(A_r\) & \(B_r\) & \(C_r\) & \(D_r\) & \(E_6\) & \(E_7\) & \(E_8\) & \(F_4\) & \(G_2\) \\
    \textbf{minimal dim} & \(r+1\) & \(2r+1\) & \(2r\) & \(2r\) & \(27\) & \(56\) & \(248\) & \(26\) & \(7\) \\
    \hline                                  
  \end{tabular}
\end{center}
\caption{Minimal dimensions of faithful representations of simple Lie algebras.}
\label{table:category2}
\end{table}
For the classical types, the following restrictions apply: \(r \ge 1\) for \(A_r\), \(r \ge 3\) for \(B_r\), \(r \ge 2\) for \(C_r\), and \(r \ge 4\) for \(D_r\).

In the classical cases, the minimal faithful representations are (up to outer automorphisms of \(\mathfrak{g}_1\)) the standard vector representations so that these yield no proper embeddings of \(\mathfrak{g}_1\) in \(\mathfrak{g} = \mathfrak{sl}_k\), \(\mathfrak{g} = \mathfrak{sp}_k\), or \(\mathfrak{g} = \mathfrak{so}_k\).  Therefore, the minimal possible \(k\) for classical \(\mathfrak{g}_1\) is strictly greater than the dimensions given in the table.  In the three cases \(\mathfrak{g} = \mathfrak{sl}_k\), \(\mathfrak{g} = \mathfrak{sp}_k\), and \(\mathfrak{g} = \mathfrak{so}_k\) with \(k \neq 2 \text{ mod } 4\), we have \(m = \lfloor \frac{k}{2} \rfloor\) as we conclude again from \cite{Dynkin:subalgebras}*{Table~9} or from our Table~\ref{table:category3}.  For \(\mathfrak{g} = \mathfrak{so}_k\) with \(k = 2 \text{ mod } 4\), we have in turn \(m = \frac{k}{2}-1\).  Using these formulas, one easily checks from Table~\ref{table:category2} that always \(m \ge n\).  With this last case, the proof of Proposition~\ref{prop:regular-versus-not} is complete.

\section{Free subgroup rank of simple Lie groups}
\label{section:simple}

Now we are all set for proving Theorem~\ref{thm:nu-of-simple}.  So let \(G\) be a connected simple Lie group and let \(\mathfrak{g}_0\) be the Lie algebra of \(G\).  For the sake of clarity, we split the various parts of the proof into different subsections.

\subsection{Proof of part~\eqref{item:complex-structure}: assuming \(\mathfrak{g}_0\) has a complex structure.} We first treat the non-absolutely simple case when the real Lie algebra \(\mathfrak{g}_0\) possesses a complex structure.   To show the inequality \(\nu(G) \le \operatorname{sork} \Phi (\mathfrak{g}_0)\), let us fix such a complex structure on \(\mathfrak{g}_0\) and let \((F_2)^n \le G\) be a free \(n\)-torus.  Since \((F_2)^n\) is a center-free group, it must intersect the center \(Z(G)\) of the Lie group \(G\) in the trivial subgroup.  This has the effect that \((F_2)^n\) still embeds into \(G / Z(G)\).  The adjoint representation identifies \(G / Z(G)\) with the unit component \(\mathbf{G}^0\) of the group of complex points of the linear algebraic \(\C\)-group \(\mathbf{G} = \operatorname{Aut}_\C(\mathfrak{g}_0)\).  Let \(\mathbf{G}_j\) be the Zariski closure in \(\mathbf{G}\) of the \(j\)-th factor \(F_{2,j}\leq (F_2)^n\). %\(1 \times \cdots \times 1 \times F_2 \times 1 \times \cdots \times 1\).
Then \(\mathbf{G}_j\) contains a noncommutative free subgroup, hence is not solvable.  Therefore the corresponding Lie subalgebra \(\mathfrak{g}_j \subset \mathfrak{g}_0\) contains a nontrivial semisimple Levi subalgebra \(\mathfrak{s}_j \subset \mathfrak{g}_j\) which in turn contains a subalgebra \(\mathfrak{sl}_{2, j} \subset \mathfrak{s}_j\) isomorphic to the standard \(\mathfrak{sl}_2(\C)\).  Since the \(j\)-th and the \(k\)-th factor of \((F_2)^n\) commute for \(j \neq k\), so do \(\mathbf{G}_j\) and \(\mathbf{G}_k\) by density.  We conclude \([\mathfrak{sl}_{2, j}, \mathfrak{sl}_{2, k}] = 0\) for \(j \neq k\) because the Lie bracket \([X,Y]\) of two left invariant vector fields \(X\) and \(Y\) of \(\mathbf{G}\) is by definition the infinitesimal commutator \(\frac{\textup{d}}{\textup{d}t}\vert_{t=0} \big(\Phi^Y_{-\sqrt{t}} \circ \Phi^X_{-\sqrt{t}} \circ \Phi^Y_{\sqrt{t}} \circ \Phi^X_{\sqrt{t}}\big)\) of the flows \(\Phi^X\) and \(\Phi^Y\) of \(X\) and \(Y\).  As \(\mathfrak{sl}_2(\C)\) is center-free, this also implies \(\mathfrak{sl}_{2,j} \cap \mathfrak{sl}_{2,k} = \{0\}\) for \(j \neq k\).  Thus we have found a semisimple subalgebra of the form \((\mathfrak{sl}_2)^n \subset \mathfrak{g}_0\).  By Proposition~\ref{prop:regular-versus-not}, we also find a regular subalgebra \((\mathfrak{sl}_2)^n \subset \mathfrak{g}_0\) with respect to some Cartan subalgebra \(\mathfrak{h} \subset \mathfrak{g}_0\) and hence a closed subroot system \(\Sigma \subset \Phi(\mathfrak{g}, \mathfrak{h})\) of type \((A_1)^n\).  Picking one root from each \(A_1\)-factor yields a set of \(n\) pairwise strongly orthogonal roots, showing \(n \le \operatorname{sork} \Phi(\mathfrak{g}_0)\).

Conversely, unifying a set of \(n\) pairwise strongly orthogonal roots in \(\Phi(\mathfrak{g}_0)\) with their opposite roots yields a closed subroot system \(\Sigma \subset \Phi(\mathfrak{g}_0)\) of type \((A_1)^n\) and hence a regular subalgebra \((\mathfrak{sl}_2)^n \subset \mathfrak{g}_0\).  Since \(G\) is assumed connected and has a complex Lie algebra, it has the structure of a complex Lie group.  By the above, it contains a subgroup finitely covered by \((\operatorname{SL}_2(\C))^n\).  Example~\ref{example:nrank}\,\eqref{item:sl2z}, Proposition~\ref{prop:products}\,\eqref{item:direct-product}, and the argument involving the center from above show that \(n \le \nu(G)\).

\subsection{Proof of part~\eqref{item:real-form}: assuming \(\mathfrak{g}_0\) has no complex structure}

Now we treat the absolutely simple case and assume that \(\mathfrak{g}_0\) does not admit any complex structure. By part~\eqref{item:complex-structure}, we have \(\nu(G) \le \nu(G_\C) = \operatorname{sork} \Phi(\mathfrak{g})\) where \(G_\C\) denotes the complexification of the Lie group \(G\).  To see that part~\eqref{item:complex-structure} actually applies to \(G_\C\), one may argue that \(G_\C\) is simple because \(\mathfrak{g}_0\) admits no complex structure.  Thus \(G_\C\) is also simple as real Lie group.  Alternatively, one realizes that the above proof of~\eqref{item:complex-structure} actually works for semisimple groups and algebras.

So in this section, we only need to show the inequality \(\nu(G) \ge \operatorname{sork} \Phi(\mathfrak{g})\) under the assumption that \(\mathfrak{g}_0\) is not isomorphic to \(\mathfrak{so}(p,q)\) with odd \(p\) and \(q\) and \(p+q\) divisible by four.  We start by introducing some terminology and notation.

\subsubsection{Preliminaries}

We denote the Killing form of \(\mathfrak{g}_0\) by \(B_0(x,y) = \operatorname{tr}(\operatorname{ad} x \circ \operatorname{ad} y)\) and we pick a \emph{Cartan involution} of \(\mathfrak{g}_0\), meaning an order two automorphism \(\theta_0\) of \(\mathfrak{g}_0\) such that the form \(B_{\theta_0} (x,y) = -B_0(x,\theta_0 (y))\) is positive definite.  It is then a simple computation that ``adjoints of adjoints'' with respect to \(B_{\theta_0}\) are given by \((\operatorname{ad}(x))^* = -\operatorname{ad}(\theta_0 (x))\).  The \({+1}/{-1}\) eigenspace decomposition \(\mathfrak{g}_0 = \mathfrak{k}_0 \oplus \mathfrak{p}_0\) of \(\theta_0\) is called a \emph{Cartan decomposition} of~\(\mathfrak{g}_0\).  While \(\mathfrak{k}_0\) is a subalgebra, \(\mathfrak{p}_0\) is a subspace satisfying \([\mathfrak{p}_0,\mathfrak{p}_0] \subset \mathfrak{k}_0\).  Finally, let \(\mathfrak{h}_0 \subset \mathfrak{g}_0\) be a \(\theta_0\)-stable Cartan subalgebra.  We drop the index ``\(0\)'' when referring to the complexifications of the objects introduced so far: the simple complex Lie algebra \(\mathfrak{g} = \mathfrak{g}_0 \otimes \C\) with Cartan subalgebra \(\mathfrak{h} = \mathfrak{h}_0 \otimes \C\), the Cartan involution \(\theta = \theta_0 \otimes \C\), the symmetric \(\C\)-bilinear Killing form \(B = B_0 \otimes \C\), and the hermitian form \(B_\theta\) defined as the unique sesquilinear extension of \(B_{\theta_0}\) from \(\mathfrak{g}_0\) to \(\mathfrak{g}\).

Both \(\theta\) and the complex conjugation \(\sigma\) of \(\mathfrak{g}\) with respect to \(\mathfrak{g}_0\) act as involutions on the root system \(\Phi(\mathfrak{g},\mathfrak{h})\).  To see this, we compute that for every \(\alpha \in \Phi(\mathfrak{g},\mathfrak{h})\), every \(h \in \mathfrak{h}\) and every \(x \in \mathfrak{g}_\alpha\), we have
\[ [h, \theta(x)] = \theta([\theta(h), x]) = \theta(\alpha(\theta(h))x) = (\alpha \circ \theta)(h) \,\theta(x). \]
Setting \(\alpha^{\theta} = \alpha \circ \theta\), we thus have \(\theta(\mathfrak{g}_\alpha) = \mathfrak{g}_{\alpha^\theta}\).  Similarly, we obtain \(\sigma(\mathfrak{g}_\alpha) = \mathfrak{g}_{\alpha^\sigma}\) when setting \(\alpha^\sigma = \overline{\alpha \circ \sigma}\).  

\subsubsection{The split case}

Let us now assume that \(\mathfrak{g}_0\) is a \emph{split real form} of~\(\mathfrak{g}\), meaning the \(\theta_0\)-stable Cartan subalgebra \(\mathfrak{h}_0\) can be chosen within \(\mathfrak{p}_0\).  With such \(\mathfrak{h}_0 \subset \mathfrak{p}_0\), every \(h \in \mathfrak{h}_0\) defines a self-adjoint endomorphism \(\operatorname{ad}_{\mathfrak{g}_0}(h)\) of the Euclidean space \((\mathfrak{g}_0, B_{\theta_0})\).  The endomorphism \(\operatorname{ad}_{\mathfrak{g}}(h)\) of the unitary space \((\mathfrak{g}, B_\theta)\) is thus also self-adjoint, being the complexification of \(\operatorname{ad}_{\mathfrak{g}_0}(h)\).  This shows that all roots \(\alpha \in \mathfrak{h}^*\) of \(\mathfrak{g}\) take real values on \(\mathfrak{h}_0\).  Decomposing \(h \in \mathfrak{h} = \mathfrak{h}_0 \oplus \textup{i} \mathfrak{h}_0\) as \(h = h_1 + \textup{i} h_2\), we thus have
\[ \overline{\alpha(\sigma(h))} = \overline{\alpha(h_1 - \textup{i} h_2)} = \overline{\alpha(h_1)-\textup{i}\alpha(h_2)} = \alpha(h_1) + \textup{i} \alpha(h_2) = \alpha(h), \]
so \(\alpha^\sigma = \alpha\).  Hence in case \(\mathfrak{g}_0\) is a split form, \(\sigma\) acts trivially on \(\Phi(\mathfrak{g},\mathfrak{h})\) and preserves the root space decomposition of \(\mathfrak{g}\) with respect to~\(\mathfrak{h}\).  Therefore, by the exact same argument as in~\eqref{item:complex-structure}, a set of \(n\) pairwise strongly orthogonal roots yields a subalgebra \((\mathfrak{sl}_2(\R))^n \subset \mathfrak{g}_0\).  Thus there exists a subgroup in \(G\) which is isogenous to \((\operatorname{SL}_2(\R))^n\), whence \((F_2)^n \le G\) and \(\operatorname{sork} \Phi(\mathfrak{g}) \le \nu(G)\).

\subsubsection{The case of a compact Cartan subalgebra} \label{sec:compact-cartan}

Next we consider the opposite case and assume that we can find a Cartan subalgebra \(\mathfrak{h}_0 \subset \mathfrak{k}_0\).  In addition to \(\mathfrak{g}_0\), we consider the real subalgebra \(\mathfrak{u}_0 = \mathfrak{k}_0 \oplus \textup{i} \mathfrak{p}_0 \subset \mathfrak{g}\).  It is called a \emph{compact real form} of \(\mathfrak{g}\) because \(\mathfrak{g} = \mathfrak{u}_0 \otimes \C\) and because the corresponding real Lie group \(U \subset G_\C\) is compact since the Killing form of \(\mathfrak{u}_0\) is negative definite, being the restriction of \(B\) to \(\mathfrak{u}_0\).  Let \(\tau\) be the complex conjugation of \(\mathfrak{g}\) with respect to~\(\mathfrak{u}_0\).  Both \(\sigma\) and \(\tau\) are complex antilinear, Lie bracket preserving, commuting involutions of \(\mathfrak{g}\) and their composition in any order equals \(\theta\).  According to~\cite{Knapp:beyond}*{Theorem~6.6, p.\,295}, we can pick \(x_\alpha \in \mathfrak{g}_\alpha\) for each \(\alpha \in \Phi(\mathfrak{g}, \mathfrak{h})\) such that
\begin{align*}
[x_\alpha, x_{-\alpha}] &= h_\alpha & \\
[x_\alpha, x_\beta] &= n_{\alpha, \beta} x_{\alpha + \beta}  &\text{ if } \alpha + \beta \in \Phi(\mathfrak{g},\mathfrak{h}) \\
[x_\alpha, x_\beta] &= 0 &\text{ if } \alpha + \beta \neq 0 \text{ and } \alpha + \beta \notin \Phi(\mathfrak{g}, \mathfrak{h})
\end{align*}
where the constants \(n_{\alpha, \beta}\) satisfy \(n_{\alpha, \beta} = -n_{-\alpha, -\beta}\).  The union of the \(x_\alpha\) with the coroots \(h_i = h_{\alpha_i}\) of a choice of simple roots \(\alpha_i \in \Delta \subset \Phi(\mathfrak{g},\mathfrak{h})\) is sometimes known as a \emph{Chevalley basis} of \(\mathfrak{g}\).  It is well-known that all structure constants of \(\mathfrak{g}\) with respect to a Chevalley basis are integers~\cite{Humphreys:Lie-algebras}*{Theorem~25.2, p.\,147}.  By~\cite{Knapp:beyond}*{Theorem~6.11 and~(6.12), p.\,297}, the \(\R\)-span
\[ \langle \,\textup{i} h_\alpha, \,x_\alpha - x_{-\alpha}, \,\textup{i}(x_\alpha + x_{-\alpha}) \,\colon\, \alpha \in \Phi(\mathfrak{g},\mathfrak{h}) \,\rangle_\R \]
is a compact real form of \(\mathfrak{g}\).  Since all compact real forms of \(\mathfrak{g}\) are conjugate by an inner automorphism of \(\mathfrak{g}\) \cite{Knapp:beyond}*{Corollary~6.20, p.\,302}, we may assume that this span is \(\mathfrak{u}_0\) to begin with.  It is then immediate that \(\tau(x_\alpha) = - x_{-\alpha}\), hence \(\tau(h_\alpha) = \tau([x_\alpha, x_{-\alpha}]) = -h_{\alpha} = h_{-\alpha}\), so \(\tau\) acts on the root system \(\Phi(\mathfrak{g},\mathfrak{h})\) by point reflection.  Since \(\theta\) restricts to the identity on \(\mathfrak{h}_0 \subset \mathfrak{k}_0\), we have \(\alpha^\theta = \alpha\), so \(\theta\) acts identically on \(\Phi(\mathfrak{g},\mathfrak{h})\).  Therefore \(\sigma\), just like \(\tau\), acts by point reflection.  Since \(\theta(\mathfrak{g}_\alpha) = \mathfrak{g}_\alpha\) for all roots \(\alpha \in \Phi = \Phi(\mathfrak{g},\mathfrak{h})\), we must have \(\mathfrak{g}_\alpha \subset \mathfrak{k}\) or \(\mathfrak{g}_\alpha \subset \mathfrak{p}\) because root spaces are one-dimensional.

\begin{definition}
We call \(\alpha \in \Phi\) \emph{compact} if \(\mathfrak{g}_\alpha \subset \mathfrak{k}\) and \emph{noncompact} if \(\mathfrak{g}_\alpha \subset \mathfrak{p}\).
\end{definition}

Hence compact roots \(\alpha\) satisfy \(\theta (x_\alpha) = x_\alpha\) and \(\sigma(x_\alpha) = -x_{-\alpha}\) whereas noncompact roots satisfy \(\theta(x_\alpha) = -x_\alpha\) and \(\sigma(x_\alpha) = x_{-\alpha}\).  Moreover, for \(\theta(x_\alpha) = \pm x_\alpha\) we compute
\[ \theta(x_{-\alpha}) = \theta(-\tau(x_\alpha)) = -\tau(\theta(x_\alpha)) = -\tau(\pm x_\alpha) = -(\mp x_{-\alpha}) = \pm x_{-\alpha}, \]
hence a root is always of the same type as the opposite root.

For every root \(\alpha \in \Phi(\mathfrak{g}, \mathfrak{h})\), the closed subroot system \(\{ \pm \alpha \} \subset \Phi(\mathfrak{g}, \mathfrak{h})\) determines the regular subalgebra \(\mathfrak{sl}_\alpha = \langle h_\alpha, x_\alpha, x_{-\alpha} \rangle_\C \subset \mathfrak{g}\) isomorphic to \(\mathfrak{sl}_2(\C)\).  With the above formulas, we see that always \(\sigma(\mathfrak{sl}_\alpha) = \mathfrak{sl}_\alpha\) and that
\[ \langle \,\textup{i} h_\alpha, \,x_\alpha \mp x_{-\alpha}, \,\textup{i}(x_\alpha \pm x_{-\alpha}) \,\rangle_\R, \]
is the algebra \(\mathfrak{sl}_\alpha^\sigma\) of \(\sigma\)-fixed points of \(\mathfrak{sl}_\alpha\), where the upper signs apply for compact \(\alpha\) and the lower signs apply for noncompact \(\alpha\).  Since \(\mathfrak{sl}_\alpha^\sigma\) is a real form of the simple algebra \(\mathfrak{sl}_\alpha\), it must be simple itself, hence it is either isomorphic to \(\mathfrak{su}(2)\) or to \(\mathfrak{sl}_2(\R)\).  If \(\alpha\) is compact, we have that 
\[ \mathfrak{sl}_\alpha^\sigma = \mathfrak{sl}_\alpha^\tau \subset \mathfrak{u}_0 \cap \mathfrak{g}_0 = \mathfrak{k}_0 \]
is compact, hence \(\mathfrak{sl}_\alpha^\sigma \cong \mathfrak{su}(2)\).  If \(\alpha\) is noncompact, the endomorphism \(\operatorname{ad}_{\mathfrak{sl}_\alpha^\sigma}(x_\alpha + x_{-\alpha})\) has the real transformation matrix
\[ \begin{pmatrix} 0 & 0 & -2 \\ 0 & 0 & 0 \\ -\alpha(h_\alpha) & 0 & 0 \end{pmatrix} \]
with respect to the above basis.  So inserting \(x_\alpha + x_{-\alpha}\) in both arguments of the Killing form of \(\mathfrak{sl}_\alpha^\sigma\) gives
\begin{align*}
& \operatorname{tr}(\operatorname{ad}_{\mathfrak{sl}_\alpha^\sigma}(x_\alpha + x_{-\alpha})^2) = 4 \alpha(h_\alpha) = 4 B(h_\alpha, h_\alpha) = -4 B_0(\textup{i} h_\alpha, \theta_0(\textup{i} h_\alpha)) = \\
& = 4B_{\theta_0}(\textup{i}h_\alpha, \textup{i}h_\alpha) > 0.
\end{align*}
Thus \(\mathfrak{sl}_\alpha^\sigma \subset \mathfrak{g}_0\) is noncompact in this case and must be isomorphic to \(\mathfrak{sl}_2(\R)\).  Since \(\mathfrak{sl}_\alpha\) and \(\mathfrak{sl}_\beta\) commute for a pair of strongly orthogonal roots \(\alpha, \beta \in \Phi(\mathfrak{g},\mathfrak{h})\), so do the real subalgebras \(\mathfrak{sl}_\alpha^\sigma\) and \(\mathfrak{sl}_\beta^\sigma\).

Given a set \(\Omega \subset \Phi(\mathfrak{g},\mathfrak{h})\) of \(n\) pairwise strongly orthogonal roots, we have therefore found a subalgebra
\[ \mathfrak{f}_0 = \bigoplus_{\alpha \in \Omega} \mathfrak{sl}_\alpha^\sigma \subset \mathfrak{g}_0 \]
with \(\mathfrak{sl}_\alpha^\sigma\) isomorphic to \(\mathfrak{su}(2)\) or \(\mathfrak{sl}_2(\R)\) depending on whether \(\alpha\) is compact or noncompact.  The corresponding Lie subgroup \(F_0 \subset G\) is hence isogenous to a product of \(\operatorname{SU}(2)\)'s and \(\operatorname{SL}_2(\R)\)'s. As such, it contains \((F_2)^n\) as subgroup; see for example~\cite{Drutu-Kapovich:geometric}*{Section~7.7} for a proof that also \(\operatorname{SU}(2)\) contains \(F_2\).  This shows \(\operatorname{sork} \Phi(\mathfrak{g}) \le \nu(G)\) for all \(\mathfrak{g}_0\) with Cartan subalgebra \(\mathfrak{h}_0 \subset \mathfrak{k}_0\).  Of course this includes all compact real simple Lie algebras.

\subsubsection{The remaining cases}

It remains to consider those noncomplex real simple Lie algebras \(\mathfrak{g}_0\) which neither split nor have a compact Cartan subalgebra. The classification~\cite{Knapp:beyond}*{Theorem~6.105, p.\,362} reveals that only three such cases are left, meaning \(\mathfrak{g}_0\) is isomorphic to either of:

\begin{enumerate}[(I)]
\item \label{item:su-star} \(\mathfrak{su}^*(2n) = \mathfrak{sl}_n(\mathbb{H})\) with \(n \ge 2\),
\item \label{item:so-pq} \(\mathfrak{so}(p,q)\) with \(p,q\) odd and \(p + q \ge 8\),
\item \label{item:e6} \(\mathfrak{e}_{6(-26)}\).
\end{enumerate}

Cases~\eqref{item:su-star} and~\eqref{item:e6} are quickly dealt with.  The complexification of \(\mathfrak{su}^*(2n)\) is \(\mathfrak{sl}_{2n}(\C)\) and the standard Cartan involution of \(\mathfrak{su}^*(2n)\) has \(\mathfrak{k}_0 = \mathfrak{sp}(n)\) which is of complex type \(C_n\).  From Section~\ref{sec:compact-cartan}, we thus conclude
\[ \nu(\operatorname{SU}^*(2n)) \ge \nu(\operatorname{Sp}(n)) = \operatorname{sork}(C_n) = n = \operatorname{sork}({A_{2n-1}}) = \operatorname{sork}(\Phi(\mathfrak{sl}_{2n}(\C))), \]
where we used our calculation of strong orthogonal ranks of irreducible root systems in Proposition~\ref{prop:sork-of-irred}.  With the same argument, one gets
\[ \nu(\operatorname{E}_{6(-26)}) \ge \nu(F_{4(-52)}) = \operatorname{sork}(F_4) = 4 = \operatorname{sork}(E_6) = \operatorname{sork}(\Phi(\mathfrak{e}_6)) \]
where \(F_{4(-52)}\) denotes the unique compact Lie group whose complexified Lie algebra is \(\mathfrak{f}_4\). 
If \(p\) and \(q\) are odd and \(p+q = 2 \text{ mod } 4\), we obtain similarly
\begin{align*} \nu(\operatorname{SO}^0(p,q)) &\ge \nu(\operatorname{SO}(p) \times \operatorname{SO}(q)) = \frac{p-1}{2} + \frac{q-1}{2} = \frac{p+q}{2} -1 = \\
&= \operatorname{sork}\left(D_{\frac{p+q}{2}}\right) = \operatorname{sork}(\Phi(\mathfrak{so}(p+q; \C))).
\end{align*}

So the only case that still needs consideration is \(\mathfrak{g}_0 = \mathfrak{so}(p,q)\) with \(p, q\) odd and \(p + q = 0 \text{ mod } 4\).  Hence the proof of part~\eqref{item:real-form} is complete.

\subsection{Proof of part~\eqref{item:sopq}: \(\mathfrak{g}_0 = \mathfrak{so}(p,q)\) with \(p, q\) odd and \(p+q \equiv 0 \text{ mod } 4\)}

In this final case, we obtain from Proposition~\ref{prop:sork-of-irred} that for the maximal compact subgroup, we have \(\nu(\operatorname{SO}(p) \times \operatorname{SO}(q)) = \frac{p+q}{2} - 1\) but this time \(\operatorname{sork}(\Phi(\mathfrak{so}(p+q; \C))) = \operatorname{sork}(D_{\frac{p+q}{2}}) = \frac{p+q}{2}\).  So we only have to exclude the possibility \(\nu(\operatorname{SO}^0(p,q)) = \frac{p+q}{2}\).  If that was true, a free \(n\)-torus \((F_2)^n \subset G = \operatorname{SO}^0(p,q)\) with \(n = \frac{p+q}{2}\) would give rise to a subalgebra \(\mathfrak{f}_0 \subset \mathfrak{g}_0 = \mathfrak{so}(p,q)\) which is the direct sum of \(n\) simple ideals, each isomorphic to either \(\mathfrak{su}(2)\) or \(\mathfrak{sl}_2(\R)\).  Indeed, the adjoint representation embeds \(G/Z(G)\) into the \(\R\)-points of the \(\R\)-group \(\operatorname{Aut}(\mathfrak{g}_0)\) and the Lie algebras of the Zariski closures of the \(F_2\)-factors of \((F_2)^n \le G/Z(G) \subset \operatorname{Aut}(\mathfrak{g}_0)(\R)\) are real semisimple, hence contain either \(\mathfrak{su}(2)\) or \(\mathfrak{sl}_2(\R)\) as subalgebra.  Thus \(\mathfrak{f} = \mathfrak{f}_0\otimes \C \cong (\mathfrak{sl}_2(\C))^n\) is a \(\sigma\)-invariant subalgebra of \(\mathfrak{g} = \mathfrak{so}(2n; \C)\) for the complex conjugation \(\sigma\) in \(\mathfrak{g}\) with respect to \(\mathfrak{g}_0\).  Preservation of Jordan decomposition gives that a Cartan subalgebra \(\mathfrak{h}_0 \subset \mathfrak{f}_0\) is also a Cartan subalgebra in \(\mathfrak{g}_0\).  By~\cite{Knapp:beyond}*{Proposition~6.59}, the Cartan subalgebra \(\mathfrak{h}_0\) of \(\mathfrak{g}_0\) is conjugate to a \(\theta_0\)-stable one for any fixed Cartan involution \(\theta_0\) of \(\mathfrak{g}_0\).  Conjugating the Cartan involution instead of \(\mathfrak{h}_0\), we may assume that \(\mathfrak{h}_0\) is \(\theta_0\)-stable to begin with.  As we saw in Proposition~\ref{prop:full-rank-regular}, the subalgebra \(\mathfrak{f} \subset \mathfrak{g}\) is regular with respect to the \(\theta\)-stable Cartan subalgebra \(\mathfrak{h} = \mathfrak{h}_0 \otimes \C\).

As above, the Cartan involution \(\theta_0\) gives rise to the compact form \(\mathfrak{u}_0 = \mathfrak{k}_0 \oplus \textup{i} \mathfrak{p}_0\) whose complex conjugation \(\tau = \theta \circ \sigma\) acts on \(\Phi(\mathfrak{g},\mathfrak{h})\) by point reflection because~\cite{Knapp:beyond}*{Theorem~6.6, p.\,295} applies to any complex Cartan subalgebra.  Let \(\Sigma \subset \Phi(\mathfrak{g},\mathfrak{h})\) be the closed subroot system of type \((A_1)^n\) corresponding to \(\mathfrak{f}\).  Since \(\mathfrak{f}_0 = \mathfrak{f}^\sigma\) and \(\mathfrak{f} \subset \mathfrak{g}\) is regular, the involution \(\sigma\) must map each root of \(\Sigma\) either to itself or to its opposite.  For if \(\sigma\) swapped two \(A_1\)-factors in \(\Sigma\), then the regular subalgebra \(\mathfrak{r}\) determined by these two \(A_1\)-factors would satisfy \(\mathfrak{r}^\sigma \cong \mathfrak{sl}_2(\C)\) considered as real Lie algebra.  The Lie subgroup corresponding to \(\mathfrak{r}^\sigma\) would then have free subgroup rank one by what we already proved.  Thus the Lie subgroup \(F_0 \subset G\) corresponding to \(\mathfrak{f}_0\) could not have free subgroup rank~\(n\).

This shows that also \(\theta = \sigma \circ \tau\) either reflects or fixes each root in \(\Sigma\).  Roots reflected by \(\theta\) are also known as \emph{real roots}.  They must vanish on \((\mathfrak{h}_0 \cap \mathfrak{k}_0)\) so they take only real values on \(\mathfrak{h}_0\) because \(\operatorname{ad}(h)\) is self-adjoint for \(h \in (\mathfrak{h}_0 \cap \mathfrak{p}_0)\).  Roots fixed by \(\theta\) are known as \emph{imaginary roots}.  They must vanish on \((\mathfrak{h}_0 \cap \mathfrak{p}_0)\) so they take purely imaginary values on \(\mathfrak{h}_0\) because \(\operatorname{ad}(h)\) is skew-adjoint for \(h \in (\mathfrak{h}_0 \cap \mathfrak{k}_0)\).  We pick one root from each reflected \(A_1\)-factor in \(\Sigma\) and obtain a sequence of strongly orthogonal real roots.  The corresponding composition~\(D\) (in any order) of \emph{Cayley transforms} \cite{Knapp:beyond}*{Section~VI~7}
\[ d_\alpha  = \operatorname{Ad}(\exp \textup{i} \textstyle \frac{\pi}{4}(\theta (x_\alpha) - x_\alpha)) \]
with \(x_\alpha \in \mathfrak{g}_\alpha \cap \mathfrak{g}_0\) normalized by \(B_\theta(x_\alpha, x_\alpha) = \frac{2}{\alpha(h_\alpha)}\) yields a \(\theta_0\)-stable Cartan subalgebra \(\mathfrak{h}_0' = D(\mathfrak{h}) \cap \mathfrak{g}_0\) with respect to which all the roots in
\[ \Sigma^D = \{ \alpha \circ D^{-1} \,\colon\, \alpha \in \Sigma \} \subset \Phi(\mathfrak{g}, D(\mathfrak{h})) \]
are imaginary.  But since \(\Sigma^D \subset \Phi(\mathfrak{g}, D(\mathfrak{h}))\) is a subroot system of full rank, this means that \(\theta_0\) fixes \(\mathfrak{h}_0'\), or equivalently \(\mathfrak{h}_0' \subset \mathfrak{k}_0\), which is absurd because \(\mathfrak{g}_0\) has no Cartan subalgebra contained in \(\mathfrak{k}_0\).  This finishes the proof of part~\eqref{item:sopq} and hence the proof of Theorem~\ref{thm:nu-of-simple} is complete.

\begin{remark}
For the reader unfamiliar with Cayley transforms, here is an alternative conclusion of the proof. Since the subroot system \(\Sigma \subset \Phi(\mathfrak{g},\mathfrak{h})\) has full rank, the involution \(\theta\) acts as a composition of root kernel reflections on \(\Phi(\mathfrak{g},\mathfrak{h})\) and hence as an element of the Weyl group of \(\Phi(\mathfrak{g},\mathfrak{h})\).  But that implies that \(\theta\) acts identically on the Dynkin diagram of \(\Phi(\mathfrak{g},\mathfrak{h})\) because the automorphism group of a root system is the semidirect product of the Weyl group and the diagram symmetries.  Since point reflection is also an element of the Weyl group in type \(D_n\) with even \(n\), this shows that \(\sigma = \theta \circ \tau\) acts as an element of the Weyl group, contradicting that \(\mathfrak{g}_0\) is an outer form of \(\mathfrak{g}\).
\end{remark}

\begin{bibdiv}
  \begin{biblist}
    
\bib{Abert:non-commuting}{article}{
AUTHOR = {Ab\'{e}rt, Mikl\'{o}s},
     TITLE = {Representing graphs by the non-commuting relation},
   JOURNAL = {Publ. Math. Debrecen},
  %FJOURNAL = {Publicationes Mathematicae Debrecen},
    VOLUME = {69},
      YEAR = {2006},
    NUMBER = {3},
     PAGES = {261--269},
      ISSN = {0033-3883},
%   MRCLASS = {05C25 (20F69)},
%  MRNUMBER = {2273978},
}

\bib{Breuillard-Gelander2008}{article}{
AUTHOR = {Breuillard, E.},
author={Gelander, T.},
     TITLE = {Uniform independence in linear groups},
   JOURNAL = {Invent. Math.},
  %FJOURNAL = {Inventiones Mathematicae},
    VOLUME = {173},
      YEAR = {2008},
    NUMBER = {2},
     PAGES = {225--263},
      ISSN = {0020-9910},
%  MRCLASS = {20G30 (20F16 20F65 20G25 22E40)},
% MRNUMBER = {2415307},
% MRREVIEWER = {Herbert Abels},
       %DOI = {10.1007/s00222-007-0101-y},
       URL = {https://doi.org/10.1007/s00222-007-0101-y},
}
\bib{Bridson}{article}{
author={Bridson, Martin},
author={Wade, Richard},
title={Rigidity, the complex of free factors, and the commensurator of $Aut(F)$},
JOURNAL = {Oberwolfach Rep.},
% FJOURNAL = {Oberwolfach Reports},
year={2020},
%DOI={10.14760/OWR-2020-7},
URL={https://publications.mfo.de/handle/mfo/3714},
}

\bib{Button}{article}{
author={Button, J. O.},
title={Minimal dimension faithful linear representations of common finitely presented groups},
journal={Arxiv preprint},
% fjournal={},
volume={arXiv:1610.03712},
year={2016},
}

    \bib{Carter:Lie-Algebras}{book}{
    AUTHOR = {Carter, R. W.},
     TITLE = {Lie algebras of finite and affine type},
    SERIES = {Cambridge Studies in Advanced Mathematics},
    VOLUME = {96},
 PUBLISHER = {Cambridge University Press, Cambridge},
      YEAR = {2005},
     PAGES = {xviii+632},
      ISBN = {978-0-521-85138-1; 0-521-85138-6},
   %MRCLASS = {17-02 (17B67)},
%  MRNUMBER = {2188930},
% MRREVIEWER = {Stephen Slebarski},
       %DOI = {10.1017/CBO9780511614910},
       URL = {https://doi.org/10.1017/CBO9780511614910},
}
    
\bib{Drutu-Kapovich:geometric}{book}{
    AUTHOR = {Dru\c{t}u, Cornelia},
    AUTHOR = {Kapovich, Michael},
     TITLE = {Geometric group theory},
    SERIES = {American Mathematical Society Colloquium Publications},
    VOLUME = {63},
      NOTE = {With an appendix by Bogdan Nica},
% PUBLISHER = {American Mathematical Society, Providence, RI},
      YEAR = {2018},
     PAGES = {xx+819},
      ISBN = {978-1-4704-1104-6},
%   MRCLASS = {20F65 (20E08 20F05 20F16 20F18 20F67 20F69 57M07)},
%  MRNUMBER = {3753580},
% MRREVIEWER = {Igor Belegradek},
}

 \bib{Dynkin:maximal-subgroups}{incollection}{
   author = {Dynkin, E. B.},
   title={Maximal subgroups of the classical groups},
   booktitle={Selected papers of E. B. Dynkin with commentary},
   note={Edited by A. A. Yushkevich, G. M. Seitz and A. L. Onishchik},
   publisher={American Mathematical Society, Providence, RI; International
   Press, Cambridge, MA},
   date={2000},
   pages={xxviii+796},
   isbn={0-8218-1065-0},
   review={\MR{1757976}},
 }
 
\bib{Dynkin:subalgebras}{incollection}{
   author = {Dynkin, E. B.},
   title = {Semisimple subalgebras of semisimple Lie algebras},
   booktitle={Selected papers of E. B. Dynkin with commentary},
   note={Edited by A. A. Yushkevich, G. M. Seitz and A. L. Onishchik},
   publisher={American Mathematical Society, Providence, RI; International
   Press, Cambridge, MA},
   date={2000},
   pages={xxviii+796},
   isbn={0-8218-1065-0},
   review={\MR{1757976}},
}

\bib{Hall:topology-for-free-groups}{article}{
   AUTHOR = {Hall, Marshall, Jr.},
     TITLE = {A topology for free groups and related groups},
   JOURNAL = {Ann. of Math. (2)},
%  FJOURNAL = {Annals of Mathematics. Second Series},
    VOLUME = {52},
    number={1},
      YEAR = {1950},
     PAGES = {127--139},
   }

\bib{Humphreys:Lie-algebras}{book}{
    AUTHOR = {Humphreys, James E.},
     TITLE = {Introduction to {L}ie algebras and representation theory},
    SERIES = {Graduate Texts in Mathematics},
    VOLUME = {9},
      NOTE = {Second printing, revised},
 PUBLISHER = {Springer-Verlag, New York-Berlin},
      YEAR = {1978},
     PAGES = {xii+171},
      ISBN = {0-387-90053-5},
%   MRCLASS = {17Bxx},
%  MRNUMBER = {499562},
% MRREVIEWER = {I. P. Shestakov},
}

\bib{Knapp:beyond}{book}{
    AUTHOR = {Knapp, Anthony W.},
     TITLE = {Lie groups beyond an introduction},
    SERIES = {Progress in Mathematics},
    VOLUME = {140},
 PUBLISHER = {Birkh\"{a}user Boston, Inc., Boston, MA},
      YEAR = {1996},
     PAGES = {xvi+604},
      ISBN = {0-8176-3926-8},
%   MRCLASS = {22-01 (17-01)},
%  MRNUMBER = {1399083},
% MRREVIEWER = {H. de Vries},
       %DOI = {10.1007/978-1-4757-2453-0},
       URL = {https://doi.org/10.1007/978-1-4757-2453-0},
}

\bib{Lodha-Moore:nonamenable finitely presented group}{article}{
AUTHOR = {Lodha, Yash},
author={Moore, Justin Tatch},
     TITLE = {A nonamenable finitely presented group of piecewise projective
              homeomorphisms},
   JOURNAL = {Groups Geom. Dyn.},
%  FJOURNAL = {Groups, Geometry, and Dynamics},
    VOLUME = {10},
      YEAR = {2016},
    NUMBER = {1},
     PAGES = {177--200},
     }

\bib{Monod:groups of piecewise projective homeomorphisms}{article}{
     AUTHOR = {Monod, Nicolas},
     TITLE = {Groups of piecewise projective homeomorphisms},
   JOURNAL = {Proc. Natl. Acad. Sci. USA},
%  FJOURNAL = {Proceedings of the National Academy of Sciences of the United States of America},
    VOLUME = {110},
      YEAR = {2013},
    NUMBER = {12},
     PAGES = {4524--4527},
     }

\bib{Onishchik-Vinberg:Lie-groups}{book}{
    AUTHOR = {Onishchik, A. L.},
    AUTHOR = {Vinberg, \`E. B.},
     TITLE = {Lie groups and algebraic groups},
    SERIES = {Springer Series in Soviet Mathematics},
%      NOTE = {Translated from the Russian and with a preface by D. A. Leites},
 PUBLISHER = {Springer-Verlag, Berlin},
      YEAR = {1990},
     PAGES = {xx+328},
      ISBN = {3-540-50614-4},
   %MRCLASS = {22-01 (17B20 20G20 22E10 22E15)},
  %MRNUMBER = {1064110},
%MRREVIEWER = {James E. Humphreys},
       %DOI = {10.1007/978-3-642-74334-4},
       URL = {https://doi.org/10.1007/978-3-642-74334-4},
}

\end{biblist}
\end{bibdiv}

\end{document}